\DeclareFontFamily{U}{tipa}{}
\DeclareFontShape{U}{tipa}{m}{n}{<->tipa10}{}
\newcommand{\arc@char}{{\usefont{U}{tipa}{m}{n}\symbol{62}}}%
\newcommand{\arc}[1]{\mathpalette\arc@arc{#1}}
\newcommand{\arc@arc}[2]{%
  \sbox0{$\m@th#1#2$}%
  \vbox{
    \hbox{\resizebox{\wd0}{\height}{\arc@char}}
    \nointerlineskip
    \box0
  }%
}
\mathchardef\pFcomma=\mathcode`, 
\newcommand*\pFq[5]{%
 \begingroup
 \begingroup\lccode`~=`,
   \lowercase{\endgroup\def~}{\pFcomma\mkern\pFqskip}%
 \mathcode`,=\string"8000
 {}_{#1}F_{#2}\left(\genfrac..{0pt}{}{#3}{#4};#5\right)%
 \endgroup
}
  \DeclareMathOperator{\diag}{diag}
\DeclarePairedDelimiter\floor{\lfloor}{\rfloor}
\newtheorem{theorem}{\sc Theorem}[section]
\newtheorem{coro}{\sc Corollary}[section]
\newtheorem{proposition}{\sc Proposition}[section]
\newtheorem{obs}{\sc Remark}[section]
\newcommand{\Ima}{{\rm Im}}
\newcommand{\dps}{\displaystyle}
\begin{document}
\title[Markov theorem for weight functions on the unit circle]{Markov theorem for weight functions on the unit circle}
\author{K. Castillo}
\address{CMUC, Department of Mathematics, University of Coimbra, 3001-501 Coimbra, Portugal}
\email{kenier@mat.uc.pt}

\subjclass[2010]{30C15, 42C05}
\date{\today}
\keywords{Paraorthogonal polynomials on the unit circle, weight functions on the unit circle, variation of zeros}
\begin{abstract}
The aim of this paper is to prove that Markov's theorem on variation of zeros of orthogonal polynomials on the real line [Math. Ann., 27:177--182,
1886] remains essentially valid in the case of paraorthogonal polynomials on the unit circle.\end{abstract}
\maketitle
\section{Introduction}

In 1886, A. A. Markov proved a remarkable theorem concerning the dependence of the zeros of the elements of a sequence of orthogonal polynomials $(p_n)_{n=1}^\infty$ on a real parameter $t$ which appears in the weight function $\omega$ defined on the real interval $[a,b]$ (see \cite[p. $178$]{M86}).  Szeg\H{o} devotes two sections of his classical book to expose Markov's work (see \cite[Sections 6.12 and 6.21]{S75}) and, in a more recent monograph on the subject, Ismail refers Markov's theorem as ``an extremely useful theorem" (see \cite[p. 203]{I05}). The beauty and wide applicability of this result rest on its powerful simplicity: 
\vspace{2mm}
\begin{changemargin}{0.8cm}{0.8cm} 
{\em Under suitable conditions, the zeros of $p_n(\cdot; t)$ are increasing functions of $t$ provided that
$$
\frac{1}{\omega(x; t)}\frac{\partial \omega}{\partial t}(x; t)
$$
is an increasing function of $x$ on $(a,b)$.} 
\end{changemargin}
\vspace{2mm}
As a direct consequence of his result, Markov himself showed that the zeros of Jacobi polynomials, with weight function $\omega(x; \alpha, \beta)=(1-x)^\alpha(1+x)^\beta$ on $[-1,1]$ for $\alpha, \beta \in (-1, \infty)$, are decreasing functions of $\alpha$ and increasing functions of $\beta$. Indeed,
\begin{align*}
\frac{1}{\omega(x; \alpha,\beta)}\frac{\partial \omega(x;\alpha,\beta)}{\partial \alpha}&=\log(1-x),\\
\frac{1}{\omega(x;\alpha,\beta)}\frac{\partial \omega(x;\alpha,\beta)}{\partial \beta}&=\log(1+x).
\end{align*}
Markov also attempts a general theorem to deal with the ultraspherical case $\alpha=\beta$, but his proof is incorrect. A proof of Markov's theorem for even weight functions on $[-1,1]$ ---easy once you realize that mapping $(-1,1)$ into $(0, 1)$ the problem is reduced to the known case--- can be found in \cite[Corollary 2]{KP87}  in a more general context.  

Over the years there were many extensions to the classical theory of orthogonal polynomials on the real line (OPRL). After the influential works by Delsarte and Genin \cite{DG88, DG91a, DG91b} and Jones et al. \cite{JNT89} about the nowadays called paraorthogonal polynomials on the unit circle (POPUC) ---in many senses the appropriate complex analog of OPRL---, this collection of polynomials and their zeros have received considerable attention from two disparate audiences, namely researchers in orthogonal polynomials and researchers in numerical linear algebra (see for instance \cite{G82, G82o, G86, AGR88, DG88, GR90, DG91a, DG91b, W93, B93, GH95, BH95, S05I, KN07, S07, S07b, W07, S11, S12, S16, CCP16, CP18, MSS19, C19, S19, BS19}). It must be said that rarely in the numerical linear algebra context the name POPUC is used; however, the reader has to proceed with caution in the literature because many results on POPUC were first discovered in this framework.  As we will see below, POPUC are closely related with orthogonal polynomials on the unit circle (OPUC) and, therefore, with weight functions on the unit circle. But unfortunately Markov's theorem can not deal with it. 
In Section \ref{main} we discusses this question and investigates the extent to which Markov's theorem remains valid in the case of weight functions on the unit circle. Unlike what happens in the case of OPRL (see the proof of \cite[Theorem $6.12.1.$]{S75} and the hint of \cite[Problem $15$, Chapter III]{G71}), we can not use quadrature for our purpose because Szeg\H{o} quadrature is much weaker than Gaussian quadrature. In Section \ref{examples} we apply our results to some specific families of polynomials, but first some preliminary definitions and basic results are needed (see \cite{S05I, S11} for more details). 
\section{Preliminaries}\label{pre}
Let $\mathrm{d}\mu(\theta)$ be a finite nonnegative measure with infinite support on the unit circle parametrized by $z=e^{i\theta}$ and
$$
c_j=\int e^{-i j \theta}\mathrm{d}\mu(\theta)\quad (j=0,1,2,\dots)
$$
its moments. We will use $c_j(\mathrm{d}\mu)$ if we want the $\mathrm{d}\mu$ dependence to be explicit. Let $(Q_n)_{n=0}^\infty$ be the unique sequence of monic OPUC associated with $\mathrm{d}\mu$, that is, polynomials $Q_n(z; \mathrm{d}\mu)=Q_n(z)=z^n+\cdots$ which satisfy
\begin{align*}
 \int Q_n(e^{i \theta})\overline{Q_m(e^{i \theta})}\,\mathrm{d}\mu(\theta)&=0 \quad (n\not=m=0,1,2,\dots),\\
\int |Q_n(e^{i \theta})|^2\mathrm{d}\mu(\theta)&\not=0.
\end{align*}
Define $c_j$ for $j=-1,-2,-3,\dots$ by $c_j=\overline{c_{-j}}$. We mention the following explicit representation of $Q_n$ sometimes called Heine's formula:
\begin{align}\label{heine}
Q_n(z)=\mathrm{D}_{n-1}(\mathrm{d}\mu)^{-1}\det
\begin{pmatrix}
c_0 & c_{-1} & \cdots & c_{-n}\\
c_1 & c_0 & \cdots & c_{-n+1}\\
\vdots & \vdots & & \vdots\\
c_{n-1} & c_{n-2} & \cdots & c_{-1}\\
1 & z & \cdots & z^n
\end{pmatrix} \quad (n=1,2,\dots)
\end{align}
where $\mathrm{D}_{n-1}(\mathrm{d}\mu)=\det(c_{k-j})_{j,k=0}^{n-1}>0$ by the Carath\'eodory-Toeplitz theorem. Define the normalized OPUC by $q_n(z)=\kappa_n\, z^n+\cdots$ where $\kappa_n=\|Q_n\|^{-1}$. The CD kernel is defined for $w, z\in \mathbb{C}$ by
$$
K_{n}(w, z; \mathrm{d}\mu)=K_{n}(w, z)=\sum_{j=0}^n \overline{q_j(w)}q_j(z).
$$
For any polynomial $f$ of degree at most $n$, we have
\begin{align}\label{CD}
\int  f(e^{i \theta})K_{n}(e^{i\theta}, w) \mathrm{d}\mu(\theta)=f(w),
\end{align}
often called the reproducing property.  

Denote by $\mathbb{S}^1_r(c)$ the boundary of the open disk $\mathbb{D}_r(c)$ of radius $r>0$ with center $c$. Since the unit disk with center at the origin plays a distinguished role in the theory of OPUC, we use the notation  $\mathbb{D}=\mathbb{D}_1(0)$ and $\mathbb{S}^1=\mathbb{S}^1_1(0)$. Fix $n\in \{1,2,\dots\}$ and $b \in \mathbb{S}^1$. The monic POPUC of degree $n$ associated with $\mathrm{d}\mu$ and $b$ is defined by (see \cite[p. $115$]{S11})
\begin{align}\label{popuc}
P_n(z; b; \mathrm{d}\mu)=P_n(z)=zQ_{n-1}(z)-\overline{b} \,Q^*_{n-1}(z),
\end{align}
where $Q_n^*(z)=z^{n} \overline{Q(1/\overline{z})}$. The normalized POPUC is given by $p_n(z; b; \mathrm{d}\mu)=p_n(z)=zq_{n-1}(z)-\overline{b} \,q^*_{n-1}(z)$.   
Another appropriate denomination for POPUC is quasi-orthogonal polynomials on the unit circle, in part because 
\begin{align}\label{exp}
\int  P_n(e^{i\theta}) \overline{g(e^{i\theta})} \,\mathrm{d}\mu(\theta)=0,
\end{align}
 for any polynomial $g$ of degree at most $n-1$ vanishing at the origin, and in part because, as Geronimus pointed out (see \cite[Footnote 10, p. 12]{G48}\footnote{See also \cite[Remark I]{G46}.}), ``this property is analogous to a fundamental property of the so-called quasi-orthogonal polynomials of M. Riesz". The `quasi-orthogonality' condition \eqref{exp} gives rise to some interesting properties of POPUC. Suppose that $P_n(\zeta)=0$ and let $h$ be a nonzero polynomial of degree at most $n-1$. Since $h(z)-h(\zeta)$ has a zero of multiplicity at least one at $z=\zeta$, 
$$
\frac{z h(z)-z h(\zeta)}{z-\zeta}
$$
is a polynomial of degree $n-1$ vanishing at the origin. From \eqref{exp}, we have\footnote{As we will see later $\zeta \in \mathbb{S}^1$, and so $\zeta\not=0$.}
$$
0=-\frac{1}{\zeta} \int  P_n(e^{i \theta})\, \frac{\overline{h(e^{i \theta})-h(\zeta)}}{e^{i \theta} \overline{(e^{i \theta}-\zeta)}} \mathrm{d}\mu(\theta)=\int  \frac{P_n(e^{i \theta})}{e^{i \theta}-\zeta} \, \overline{(h(e^{i \theta})-h(\zeta))}\mathrm{d}\mu(\theta).
$$
Hence, 
 \begin{align}\label{property}
\int  \frac{P_n(e^{i \theta})}{e^{i \theta}-\zeta}\,\, \overline{h(e^{i\theta})}\mathrm{d}\mu(\theta)=\overline{h(\zeta)} \int  \frac{P_n(e^{i \theta})}{e^{i \theta}-\zeta} \mathrm{d}\mu(\theta),
\end{align}
for any polynomial $h$ of degree at most $n-1$. Moreover, since there exists $C\in \mathbb{C}\setminus \{0\}$ (cf. \cite[p. 284]{W07}) such that
 \begin{align*}
 P_n(z)=C (z-\zeta)K_{n-1}(\zeta, z),
 \end{align*} 
 \eqref{CD} shows that
 \begin{align}\label{C}
C=\int  \frac{P_n(e^{i \theta})}{e^{i \theta}-\zeta}\mathrm{d}\mu(\theta)\not=0.
 \end{align}

Denote by $a_j=-\overline{Q_{j+1}(0)}$  the Verblunsky coefficients. Set
\begin{align*}
\Theta_j=\Theta(a_j)=\begin{pmatrix}
\overline{a}_j & \ \ r _j\\
r_j & -a_j
\end{pmatrix},
\end{align*}
where $r_j=\left(1-|a_j|^2\right)^{1/2}$.
Define $\mathrm{G}_j=\diag \left(\mathrm{I}_{j}, \Theta_j, \mathrm{I}_{n-j-2} \right)$ and $\mathrm{G}_{n-1}=\diag(\mathrm{I}_{n-1}, \overline{b})$. (Here $\mathrm{I}$ denotes the identity matrix, whose order is made explicit with a subindex.) It is well known that $P_n$ is the characteristic polynomial of the GGT unitary matrix (see for instance \cite[(4.19)]{DG91b})
\begin{align}\label{G}
\mathrm{G}=\mathrm{G}_0\mathrm{G}_1\cdots \mathrm{G}_{n-1}.
\end{align}
 In practical work it is not always necessary to write this matrix explicitly, but it is important to known that $\mathrm{G}$ is a unitary upper Hessenberg matrix with positive subdiagonal elements. Therefore the zeros of POPUC have two very attractive properties: (1) All the zeros of $P_n$ lie on $\mathbb{S}^1$; (2) The zeros of $P_n$ are all simple (see a different proof in \cite[Theorem $9.1.$]{G48}).
\section{Main results}\label{main}
Let us introduce the notation $C_r(c)=\mathbb{D}_r(c)\cap \mathbb{S}^1$ and $I_r(c)=\mathbb{D}_r(c)\cap \mathbb{R}$. In what follows we shall use (explicitly or implicitly) the following result. 

\begin{proposition}\label{pmain}
Let $\mathrm{d}\mu(\theta; t)=\omega(\theta; t)\, \mathrm{d}\mu(\theta)$ be a finite nonnegative measure with infinite support on the unit circle parametrized by $z=e^{i\theta}$ $(\theta \in [\theta_0, \theta_0+2\pi))$ and depending on a parameter $t$ varying  in a real open interval containing $t_0$. Suppose that for almost all $\theta \in [\theta_0, \theta_0+2\pi)$,
$\omega(\theta; t)$ is finite and admits partial derivative with respect to $t$. Suppose furthermore that there exists a $\mu$-integrable function $\alpha$ such that $$\left|\frac{\partial\omega}{\partial t}(\theta; t)\right|\leq \alpha(\theta),$$
almost everywhere in $ [\theta_0, \theta_0+2\pi)$. Let $P(z; t)$ be a nonconstant monic POPUC associated with $\mathrm{d}\mu(\theta; t)$.  Assume that $P(\zeta_0; t_0)=0$.  Then there exist $\epsilon>0$ and $\delta>0$ such that $C_\delta(\zeta_0) \times I_\epsilon(t_0)$ is in the neighbourhood where $P$ is defined, and there exists $\zeta: I_\epsilon(t_0) \to C_\delta(\zeta_0)$, such that 
\begin{align}\label{zero}
P(\zeta(t); t)=0
\end{align}
and, for each $t \in I_\epsilon(t_0)$, $\zeta$ is the unique solution of \eqref{zero} with $\zeta(t)\in C_\delta(\zeta_0)$. Moreover, $\zeta$ possess continuous derivatives on $I_\epsilon(t_0)$.
\end{proposition}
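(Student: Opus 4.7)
The plan is to combine (i) smoothness in $t$ of the coefficients of $P(z;t)$, obtained by differentiating the moments of $\mathrm{d}\mu(\cdot;t)$ under the integral sign, with (ii) the implicit function theorem applied to the holomorphic function $z\mapsto P(z;t)$. Simplicity of the zeros of POPUC, noted at the end of Section~\ref{pre}, will provide the non-degeneracy required to invoke that theorem.

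First I would verify that the moments
\[
c_j(t)=\int e^{-ij\theta}\,\omega(\theta;t)\,\mathrm{d}\mu(\theta), \qquad j\in\mathbb{Z},
\]
depend continuously differentiably on $t$. The hypothesis that $\partial_t\omega(\cdot;t)$ exists $\mu$-almost everywhere and is dominated by the $\mu$-integrable function $\alpha$ is precisely the Leibniz condition for differentiation under the integral sign, and yields
$c_j'(t)=\int e^{-ij\theta}\,\partial_t\omega(\theta;t)\,\mathrm{d}\mu(\theta)$
in a real open interval around $t_0$. Heine's formula \eqref{heine} together with the construction \eqref{popuc} then expresses the coefficients of $P(z;t)$ as rational functions of the moments, whose denominators are the strictly positive Toeplitz determinants $\mathrm{D}_{n-1}$, which remain bounded away from zero locally in $t$. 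Hence every coefficient of $P(z;t)$ is of class $C^1$ on some interval around $t_0$.

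Next I would set $F(z,t):=P(z;t)$, a map of class $C^1$ in $t$ and holomorphic in $z$, with $F(\zeta_0,t_0)=0$. Because POPUC have only simple zeros, $\partial_zF(\zeta_0,t_0)=P'(\zeta_0;t_0)\neq 0$. The holomorphic implicit function theorem then supplies $\delta>0$, $\epsilon>0$, and a $C^1$ function $\zeta:I_\epsilon(t_0)\to\mathbb{D}_\delta(\zeta_0)$ with $\zeta(t_0)=\zeta_0$, equation \eqref{zero}, and uniqueness of $\zeta(t)$ among the zeros of $P(\cdot;t)$ in $\mathbb{D}_\delta(\zeta_0)$. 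Since every zero of a POPUC lies on $\mathbb{S}^1$, the values of $\zeta$ automatically belong to $\mathbb{D}_\delta(\zeta_0)\cap\mathbb{S}^1=C_\delta(\zeta_0)$, as required.

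The main obstacle, and the only step needing genuine care, is the first one: the hypothesis on $\omega$ is fairly mild, so one has to be attentive when passing from a Leibniz-type argument (which delivers differentiability) to continuity of $c_j'$ on a full neighbourhood of $t_0$. Once the $C^1$ dependence of the moments is secured, Heine's formula, the positivity of the Toeplitz determinants, and a single invocation of the implicit function theorem finish the proof.
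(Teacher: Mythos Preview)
Your proposal is correct and follows essentially the same route as the paper: differentiate the moments $c_j(t)$ under the integral sign via the domination hypothesis, transfer this regularity to the coefficients of $P(\cdot;t)$ through Heine's formula \eqref{heine} and the positivity of $\mathrm{D}_{n-1}$, and then invoke the (analytic) implicit function theorem using the simplicity of POPUC zeros to guarantee $\partial_z P(\zeta_0;t_0)\neq 0$. Your additional remarks --- that the Toeplitz determinants stay bounded away from zero locally, that the values of $\zeta$ automatically land in $C_\delta(\zeta_0)$ because POPUC zeros lie on $\mathbb{S}^1$, and that passing from differentiability to $C^1$ of the moments requires some care --- are welcome precisions but do not depart from the paper's argument.
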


\begin{proof}
Assume that $P$ has fixed positive degree $n$. From \eqref{heine} we see that the coefficients of $P$ are rational functions of $c_j(\mathrm{d}\mu)$ $(j=-n, \dots, n-2,n-1)$, where the denominator is the determinant $\mathrm{D}_{n-1}(\mathrm{d}\mu)$. Under our hypotheses, we can differentiate 
$$
c_j(\mathrm{d}\mu(\cdot; t))=\int e^{-i j \theta}\omega(\theta; t) \mathrm{d}\mu(\theta)
$$
under the integral sign (cf. \cite[pp. 124-125]{D70}); we see immediately then that the coefficients of $P(\cdot; t)$ are differentiable functions for each $t$. Moreover, $P(\zeta_0; t_0)=0$; from this it follows that 
$$
\left. \frac{\partial P}{\partial z}(z; t)\right|_{z=\zeta_0, t=t_0}\not=0,
$$
and the result is a direct consequence of the analytic implicit function theorem (see \cite[Theorem 3.4.2]{Si15}).
\end{proof}

We shall refer to Theorem \ref{markov} below as circular Markov theorem with a fixed zero.

\begin{theorem}\label{markov}
 Assume the hypotheses and notation of Proposition \ref{pmain}. Assume also that $P(e^{i\theta_0}; t)=0$ for each $t\in  I_\epsilon(t_0)$. Suppose that $\omega(\theta; t)$ is positive and continuous for each $\theta \in [\theta_0, \theta_0+2\pi)$ and $t \in I_\epsilon(t_0)$.  Suppose furthermore that the partial derivative of $\omega(\theta; t)$ with respect to $t$ is continuous for each $\theta \in [\theta_0, \theta_0+2\pi)$ and $t \in I_\epsilon(t_0)$. Then $\zeta(t)$ moves strictly counterclockwise along $\mathbb{S}^1$ as $t$ increases on $I_\epsilon(t_0)$, provided that
 \begin{align}\label{mark}
\frac{1}{\omega(\theta; t)}\frac{\partial \omega}{\partial t}(\theta; t)
\end{align}
is a strictly increasing function of $\theta$ on  $(\theta_0, \theta_0+2\pi)$.
\end{theorem}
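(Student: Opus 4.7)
The strategy is to derive an explicit integral formula for $\theta_\zeta'(t)$ (where $\zeta(t)=e^{i\theta_\zeta(t)}$) whose positivity is transparent from the monotonicity hypothesis on $\sigma:=\omega^{-1}\partial_t\omega$. Proposition \ref{pmain} already provides the smoothness of $\zeta(t)$, and ``moving counterclockwise'' is $\theta_\zeta'(t)>0$. Differentiating $P(\zeta(t);t)=0$ in $t$, using $|\zeta|=1$ so that $\dot\zeta=i\zeta\theta_\zeta'$, and applying the representation $P(z;t)=C(t)(z-\zeta)K_{n-1}(\zeta,z;t)$ from \eqref{C} (which implies $P'(\zeta;t)=C(t)K_{n-1}(\zeta,\zeta;t)$), I obtain
\begin{equation*}
\theta_\zeta'(t) \;=\; -\,\frac{\partial_t P(\zeta;t)}{i\,\zeta\,C(t)\,K_{n-1}(\zeta,\zeta;t)}.
\end{equation*}
The task reduces to rewriting the numerator in terms of $\sigma$.

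The crucial ingredient is that the fixed zero $e^{i\theta_0}$ supplies a second factorization of the same POPUC: applying \eqref{C} at $e^{i\theta_0}$ gives $P(z;t)=(z-e^{i\theta_0})\,C_{0}(t)\,\kappa(z;t)$ with $\kappa(z;t):=K_{n-1}(e^{i\theta_0},z;t)$ and $C_{0}(t):=\int P/(e^{i\theta}-e^{i\theta_0})\,d\mu(\theta;t)$. Since $\zeta$ is a second zero of $P$, $\kappa(\zeta;t)=0$, and therefore $\partial_t P(\zeta;t)=(\zeta-e^{i\theta_0})\,C_{0}(t)\,\partial_t\kappa(\zeta;t)$, the contribution of $\dot C_{0}$ being killed by $\kappa(\zeta;t)=0$. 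To compute $\partial_t\kappa(\zeta;t)$, I differentiate in $t$ the reproducing identity $h(e^{i\theta_0})=\int h(e^{i\theta})\,\overline{\kappa(e^{i\theta};t)}\,d\mu(\theta;t)$ valid for every polynomial $h$ of degree $\leq n-1$ independent of $t$: the result is $\partial_t\kappa=-\Pi(\sigma\kappa)$, where $\Pi$ is the $L^{2}(d\mu(\cdot;t))$-orthogonal projection onto polynomials of degree $\leq n-1$. Evaluating $\Pi(\sigma\kappa)$ at $\zeta$ via its reproducing kernel, substituting $\kappa=P/[C_{0}(e^{i\theta}-e^{i\theta_0})]$ and $\overline{K_{n-1}(\zeta,e^{i\theta})}=\overline{P}/[\overline{C}(e^{-i\theta}-\overline{\zeta})]$, and using the identity $(e^{-i\theta}-\overline{\zeta})^{-1}=-\zeta e^{i\theta}/(e^{i\theta}-\zeta)$ valid on $\mathbb{S}^{1}$, an algebraic simplification yields
\begin{equation*}
\theta_\zeta'(t) \;=\; \frac{\sin\tfrac{\theta_\zeta-\theta_0}{2}}{2\,|C(t)|^{2}\,K_{n-1}(\zeta,\zeta;t)}\int\frac{|P(e^{i\theta};t)|^{2}\,\sigma(\theta;t)}{\sin\tfrac{\theta-\theta_0}{2}\,\sin\tfrac{\theta-\theta_\zeta}{2}}\,d\mu(\theta;t).
\end{equation*}

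To finish, I show that replacing $\sigma$ by the constant $\sigma(\theta_\zeta;t)$ in this integral produces $0$, so that $\sigma(\theta;t)$ may be replaced by $\sigma(\theta;t)-\sigma(\theta_\zeta;t)$. Via the partial fraction expansion
\begin{equation*}
\frac{e^{i\theta}}{(e^{i\theta}-e^{i\theta_0})(e^{i\theta}-\zeta)} \;=\; \frac{1}{\zeta-e^{i\theta_0}}\left(\frac{-e^{i\theta_0}}{e^{i\theta}-e^{i\theta_0}}+\frac{\zeta}{e^{i\theta}-\zeta}\right),
\end{equation*}
this reduces to computing $\int|P|^{2}/(e^{i\theta}-w)\,d\mu(\theta;t)$ for $w=e^{i\theta_0}$ and $w=\zeta$. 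The quasi-orthogonality \eqref{exp} together with the decomposition $q_{j}(z)=q_{j}(0)+z\tilde q_{j}(z)$ gives $\int P\,\overline{q_{j}}\,d\mu(\theta;t)=\overline{q_{j}(0)}\int P\,d\mu(\theta;t)$ for $j=0,\dots,n-1$; hence the $L^{2}$-projection of $P$ onto polynomials of degree $\leq n-1$ is $m_{0}(t)K_{n-1}(0,\cdot;t)$ with $m_{0}(t):=\int P\,d\mu(\theta;t)$. Using this together with $P(0)=-\overline{b}$ (which identifies $K_{n-1}(w,0)$ for $w=e^{i\theta_0}$ and $w=\zeta$), both integrals reduce to the same multiple of $\overline{m_{0}}\,\overline{b}$, and the partial-fraction combination cancels. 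Once $\sigma$ is replaced by $\sigma-\sigma(\theta_\zeta;t)$, the integrand is manifestly nonnegative on $(\theta_0,\theta_0+2\pi)$: $\sin\tfrac{\theta_\zeta-\theta_0}{2}$ and $\sin\tfrac{\theta-\theta_0}{2}$ are positive there, while strict monotonicity of $\sigma(\cdot;t)$ forces $\sigma(\theta;t)-\sigma(\theta_\zeta;t)$ to have the same sign as $\sin\tfrac{\theta-\theta_\zeta}{2}$. Infinite support of $\mu$ yields strict positivity.

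The main obstacle I anticipate is precisely the cancellation $\int|P|^{2}e^{i\theta}/[(e^{i\theta}-e^{i\theta_0})(e^{i\theta}-\zeta)]\,d\mu(\theta;t)=0$; without it, one is stuck with a formula for $\theta_\zeta'$ involving $\sigma$ paired with a kernel of indefinite sign, and no monotonicity argument can be invoked. This cancellation really reflects the symmetric role of the two zeros $e^{i\theta_0}$ and $\zeta$ of the POPUC in the quasi-orthogonality framework, and it is the essential substitute for the Gaussian-quadrature identity $\int\pi_n^{2}/(y-x_k)\,\omega\,dy=0$ used in the real-line argument.
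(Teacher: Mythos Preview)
Your argument is correct and lands on the same integral representation for $\theta_\zeta'(t)$ that the paper derives (its \eqref{int}), followed by the same subtraction of $\sigma(\theta_\zeta;t)$ and the same sign analysis. The route, however, is different in spirit. The paper works entirely through the quasi-orthogonality relation \eqref{exp}, testing $P_n$ against tailor-made polynomials: the denominator $\partial_z P_n(\zeta;t)$ is extracted by taking $g(z)=zR(z)/(z-\zeta)^2$ with $R(z)=P_n(z)-\partial_zP_n(\zeta)(z-\zeta)$; a second relation at the fixed zero $\xi=e^{i\theta_0}$ is obtained from \eqref{property}; and the crucial vanishing $\int e^{i\theta}|P_n|^2/[(e^{i\theta}-\xi)(e^{i\theta}-\zeta)]\,\mathrm{d}\mu(\theta;t)=0$ is a one-line consequence of \eqref{exp} applied to $g(z)=zP_n(z)/[(z-\xi)(z-\zeta)]$. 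You instead lean on the CD-kernel factorizations $P=C(z-\zeta)K_{n-1}(\zeta,\cdot)$ and $P=C_0(z-e^{i\theta_0})K_{n-1}(e^{i\theta_0},\cdot)$, differentiate the reproducing identity to get $\partial_t K_{n-1}(e^{i\theta_0},\cdot)=-\Pi(\sigma K_{n-1}(e^{i\theta_0},\cdot))$, and unwind via a second reproducing evaluation at $\zeta$. Your derivation of the cancellation identity --- computing $\Pi P=m_0K_{n-1}(0,\cdot)$ and reducing $\int|P|^2/(e^{i\theta}-w)\,\mathrm{d}\mu$ to $\overline{m_0}\,\overline b/w$ via reproducing at $0$ --- is sound but considerably more elaborate than the paper's direct application of \eqref{exp}. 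What your approach buys is a structural explanation: the formula is really a statement about $\partial_t$ of the reproducing kernel, and the fixed zero enters only through the choice of base point. What the paper's approach buys is minimality of hypotheses (it uses nothing beyond \eqref{exp}, as the paper itself emphasizes) and a much shorter path to the key vanishing.
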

\begin{proof}
Assume that $P$ has fixed degree $n\geq 2$ and write $P_n$ instead of $P$. By the analytic implicit function theorem, we have
\begin{align}\label{derivative}
 \zeta'(t)=\dps -\frac{\dps \frac{\partial P_n}{\partial t}(\zeta(t); t)}{\dps \frac{\partial P_n}{\partial z} (\zeta(t); t)}
\end{align}
for each $t \in I_\epsilon(t_0)$.
Since the leading coefficient of $P_n(\cdot; t)$ does not depend on $t$, \eqref{property} and \eqref{C} make it obvious that
\begin{align}
\label{part2} 
\frac{\partial P_n}{\partial t}(\zeta(t); t) =\frac{\dps \int  \frac{\overline{P_n(e^{i \theta}; t)}}{\overline{e^{i \theta}-\zeta(t)}} \dps \frac{\partial P_n}{\partial t}(e^{i\theta}; t) \mathrm{d}\mu(\theta; t)}{\dps \int  \frac{\overline{P_n(e^{i \theta}; t)}}{\overline{e^{i \theta}-\zeta(t)}}\mathrm{d}\mu(\theta; t)}.
\end{align}
Define the polynomial of degree $n$ in $z$,
$$
R(z; t)=P_n(z; t)-\frac{\partial P_n}{\partial z}(\zeta(t); t) (z-\zeta(t)).
$$
Since $R(z; t)$ has a zero of multiplicity at least two at $z=\zeta(t)$, 
$$
\frac{z R(z; t)}{(z-\zeta(t))^2}
$$
is a nonzero polynomial of degree $n-1$ in $z$ vanishing at the origin. Therefore
 \begin{align}
\nonumber 0&=-\frac{1}{\zeta(t)}\int  P_n(e^{i\theta}; t)\, \frac{\overline{R(e^{i\theta}; t)}}{e^{i\theta}\overline{(e^{i\theta}-\zeta(t))}^2}\mathrm{d}\mu(\theta; t)\\[7pt]
\label{part1} &=\int  \left|\frac{P_n(e^{i \theta}; t)}{e^{i \theta}-\zeta(t)}\right|^2\mathrm{d}\mu(\theta; t)-\overline{\frac{\partial P_n}{\partial z}(\zeta(t); t)} \int  \frac{P_n(e^{i \theta}; t)}{e^{i \theta}-\zeta(t)}\mathrm{d}\mu(\theta; t)
\end{align}
by \eqref{exp}.
Combining \eqref{part2} with \eqref{part1}  we can rewrite \eqref{derivative} as
\begin{align}\label{derivative1}
 \zeta'(t)=-\dps \frac{\dps \int  \frac{\overline{P_n(e^{i \theta}; t)}}{\overline{e^{i \theta}-\zeta(t)}} \dps \frac{\partial P_n}{\partial t}(e^{i\theta}; t)\mathrm{d}\mu(\theta; t)}{\dps \int  \left|\frac{P_n(e^{i \theta}; t)}{e^{i \theta}-\zeta(t)}\right|^2\mathrm{d}\mu(\theta; t)}.
\end{align}
Write $\xi =e^{i\theta_0}$. From \eqref{exp}, we also get
\begin{align}\label{ort0}
0=\int  \frac{\overline{P_n(e^{i \theta}; t)}}{\overline{e^{i \theta}-\xi}} \dps \frac{\partial P_n}{\partial t}(e^{i\theta}; t)\mathrm{d}\mu(\theta; t).
\end{align}
Write $\zeta(t)=e^{i\varphi(t)}$ $(\varphi(t) \in [\theta_0, \theta_0+2\pi))$ and let $C(t)$ denotes the denominator of the right hand side of \eqref{derivative1}. Note that 
$$
\frac{i\xi}{e^{i\theta}-\xi}-\frac{i\zeta(t)}{e^{i\theta}-\zeta(t)}=\frac{i (\xi-\zeta(t)) e^{i\theta}}{(e^{i\theta}-\xi)(e^{i\theta}-\zeta(t))}.
$$
 If \eqref{derivative1} and \eqref{ort0} are multiplied by $-i\overline{\zeta(t)}$ and $-i\overline{\xi}$ respectively and the resulting equations are added, we have
\begin{align}\label{newder}
C(t)\varphi'(t)=\int \frac{i (\zeta(t)-\xi) e^{i\theta}}{(e^{i\theta}-\xi)(e^{i\theta}-\zeta(t))} P_n(e^{i\theta}; t) \overline{\frac{\partial P_n}{\partial t}(e^{i\theta}; t)}\mathrm{d}\mu(\theta; t).
\end{align}
Since 
$$
\frac{z P_n(z, t)}{(z-\xi)(z-\zeta(t))}
$$ 
is a nonzero polynomial of degree $n-1$ in $z$ vanishing at the origin, \eqref{exp} yields 
\begin{align}
 \label{exp0} 0=\int  \frac{e^{i\theta}}{(e^{i\theta}-\xi)(e^{i\theta}-\zeta(t))}  |P_n(e^{i\theta}; t)|^2\,  \mathrm{d}\mu(\theta; t).
\end{align}
Taking the partial derivative of \eqref{exp0} with respect to $t$ and using \eqref{exp} leads to
\begin{align}
\label{aux0} &\int \frac{e^{i\theta}}{(e^{i\theta}-\xi)(e^{i\theta}-\zeta(t))} P_n(e^{i\theta}; t) \overline{\frac{\partial P_n}{\partial t}(e^{i\theta}; t)}\mathrm{d}\mu(\theta; t)\\[7pt]
\nonumber &=-\int  \frac{e^{i\theta}}{(e^{i\theta}-\xi)(e^{i\theta}-\zeta(t))}  |P_n(e^{i\theta}; t)|^2\,\frac{\partial \omega}{\partial t}(\theta; t) \mathrm{d}\mu(\theta).
\end{align}
Define the real-valued function
\begin{align*}
\varpi(\theta; t)=\dps\frac{1}{\dps \omega(\theta; t)}\frac{\partial \omega}{\partial t}(\theta; t)- \frac{1}{\omega(\varphi(t); t)}\frac{\partial \omega}{\partial t}(\varphi(t); t).
\end{align*}
Combining \eqref{exp0} with \eqref{aux0} we deduce that
\begin{align}
\label{last0}&-\int \frac{e^{i\theta}}{(e^{i\theta}-\xi)(e^{i\theta}-\zeta(t))} P_n(e^{i\theta}; t) \overline{\frac{\partial P_n}{\partial t}(e^{i\theta}; t)}\mathrm{d}\mu(\theta; t)\\[7pt]
\nonumber &=\int  \frac{e^{i\theta}}{(e^{i\theta}-\xi)(e^{i\theta}-\zeta(t)}  |P_n(e^{i\theta}; t)|^2\,\varpi(\theta; t) \mathrm{d}\mu(\theta; t).
\end{align}
Substituting \eqref{last0} into \eqref{newder}, we can assert that
\begin{align}\label{int}
C(t) \varphi'(t)= \int  \frac{i(\xi-\zeta(t))\,e^{i\theta}}{(e^{i\theta}-\xi)(e^{i\theta}-\zeta(t))}  |P_n(e^{i\theta}; t)|^2\,\varpi(\theta; t) \mathrm{d}\mu(\theta; t).
\end{align}
\begin{figure}[H]
\centering
 \includegraphics[width=8cm]{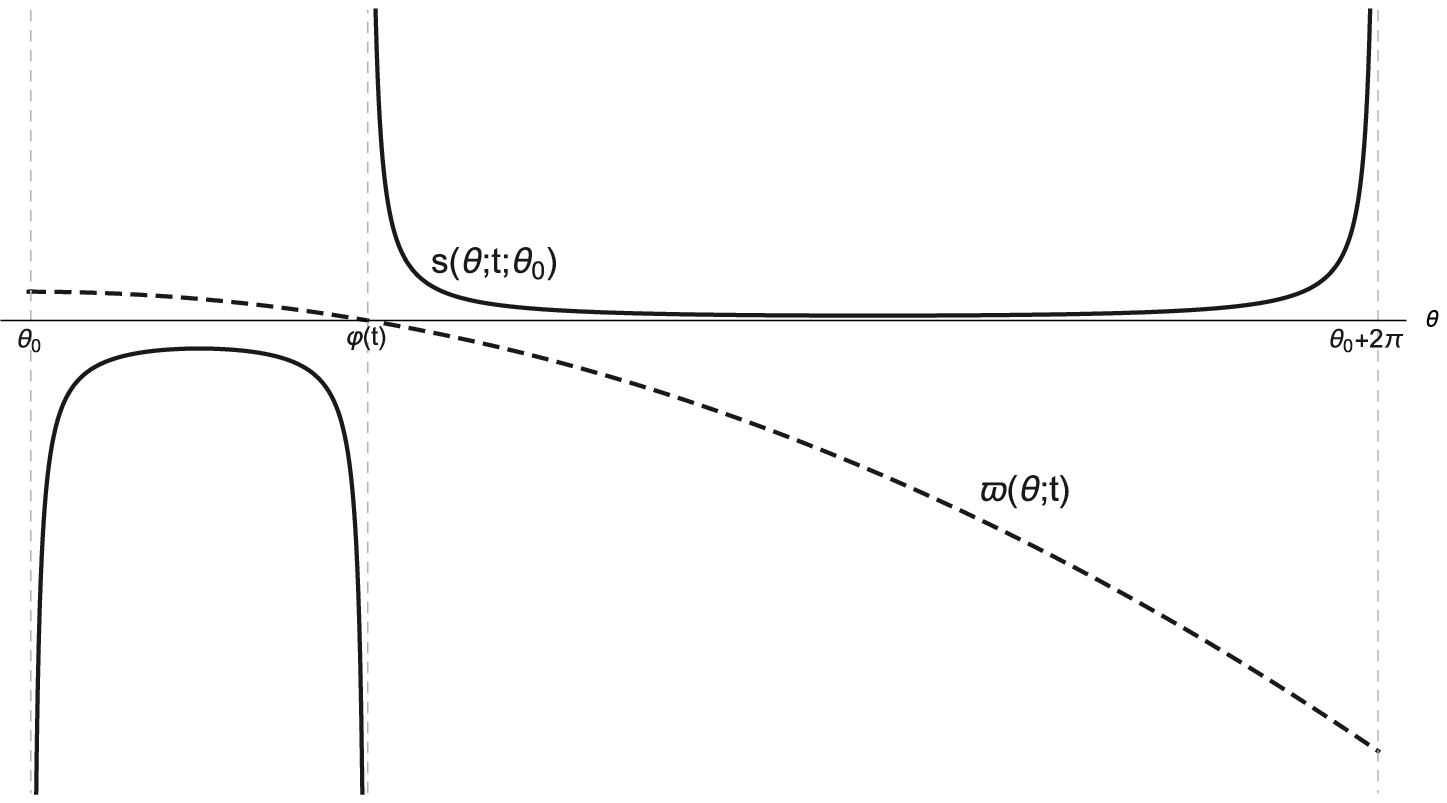}\label{proof1}
 \caption{$s$ and $\varpi$ for the circular Markov theorem with a fixed zero.}
\end{figure} 
Observe  that, for each $t \in I_\epsilon(t_0)$, the real-valued function
 $$
s(\theta; t; \theta_0)= \frac{i(\xi-\zeta(t))\,e^{i\theta}}{(e^{i\theta}-\xi)(e^{i\theta}-\zeta(t))}=\dps-\frac{1}{2} \frac{\sin\left(\dps\frac{\varphi(t)-\theta_0}{2}\right)}{ \sin\left(\dps\frac{\varphi(t)-\theta}{2}\right)\sin\left(\dps\frac{\theta_0-\theta}{2}\right)} 
 $$
 is negative for $\theta \in (\theta_0, \varphi(t))$ and positive for $\theta \in (\varphi(t), \theta_0+2\pi)$. Since, for each $t \in I_\epsilon(t_0)$, $\varpi(\theta; t)$ is positive for $\theta\in (\theta_0, \varphi(t))$ and negative for $\theta \in (\varphi(t), \theta_0+2\pi)$, $\varphi'(t)$ is negative $($see Figure 1$)$, and the theorem is proved.
\end{proof}

\begin{obs}
Theorem \ref{markov} specializes to  \cite[Theorem $3$]{L19} if $\theta_0=0$ $($or what is the same, $P(1; t)=0$$)$  and $\mathrm{d}\mu(\theta)=\mathrm{d}\theta$, the Lebesgue measure. It is important to highlight that unlike \cite{L19}, where several previous results related to the particular case considered are needed, our arguments make use only of the condition \eqref{exp}. We also note that virtually \cite[Theorem $1$]{L19} and the main sentence of \cite[Theorem $2$]{L19} are already proved in \cite[Section $5$]{DG88}\footnote{The reader must recall that the recurrence relation \cite[(1.1)]{L19} can be transformed into the simplest form \cite[(2.12)]{DG88} by a normalization process (see \cite[pp. $226$-$227$]{DG91a}). In any case, \cite[Theorem $1$]{L19} is proved in a more general setting in \cite[Corollary $2.14.5$]{S11} $($see in this regard Remark \ref{remarkPro} below$)$.} and \cite[Theorem B]{C15}\footnote{A refined version of \cite[Theorem B]{C15} can be find in \cite[Corollary $3.2$]{C19}, see also preprint available at arXiv:$1706.05709$ (2017).}, respectively. 
\end{obs}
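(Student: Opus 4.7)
The plan is to verify the three comparative assertions in the remark in turn, each by direct inspection rather than by fresh calculation.

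For the specialization claim, I would substitute $\theta_0 = 0$ and $\mathrm{d}\mu(\theta) = \mathrm{d}\theta$ in Theorem~\ref{markov}. The anchored-zero hypothesis becomes $P(1; t) = 0$, the measure reduces to $\omega(\theta; t)\,\mathrm{d}\theta$, the regularity conditions on $\omega$ collapse onto the standing assumptions of \cite[Theorem 3]{L19}, and the monotonicity hypothesis \eqref{mark} transcribes verbatim. The conclusion that $\zeta(t)$ moves strictly counterclockwise matches the corresponding monotonicity of the argument $\varphi(t)$ stated in \cite{L19}. A short paragraph listing the hypotheses side by side, in the order they appear in the two papers, suffices.

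For the methodological contrast, I would itemize the ingredients actually used in the proof of Theorem~\ref{markov}: three applications of the quasi-orthogonality identity \eqref{exp} (in \eqref{ort0}, \eqref{exp0}, and indirectly via \eqref{part1}), the implicit function theorem through \eqref{derivative}, and an elementary trigonometric reduction of $s(\theta; t; \theta_0)$. The point to be made is that no finer OPUC machinery (Szeg\H{o} recurrence, CMV matrix, Verblunsky coefficients beyond the definition of $\Theta_j$) is invoked. By contrast, as already recorded in the footnotes, \cite{L19} chains together several auxiliary results tied to the specific recurrence \cite[(1.1)]{L19}; no further proof is needed beyond this checklist of ingredients.

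For the two precedence claims, the plan is to use the normalization transformation in \cite[pp.~226--227]{DG91a} to identify the recurrence \cite[(1.1)]{L19} with the simpler form \cite[(2.12)]{DG88}. Under this correspondence, \cite[Theorem 1]{L19} is translated into a statement already contained in \cite[Section~5]{DG88} (and, in a more general framework, in \cite[Corollary~2.14.5]{S11}), while the main sentence of \cite[Theorem 2]{L19} becomes \cite[Theorem B]{C15} (refined in \cite[Corollary~3.2]{C19}). The main obstacle here is purely bookkeeping: three normalization conventions (those of \cite{L19}, \cite{DG88}, and \cite{S11}) coexist and have to be reconciled. I would write down the explicit translation formulas once, and then invoke them throughout rather than re-deriving them at each reference.
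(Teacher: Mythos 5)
The paper offers no proof of this remark at all: it is stated as an observation whose first claim is a routine specialization of Theorem \ref{markov} (set $\theta_0=0$ so that $e^{i\theta_0}=1$, take $\mathrm{d}\mu(\theta)=\mathrm{d}\theta$) and whose remaining claims are methodological and bibliographic, delegated to the footnotes. Your verification plan unpacks exactly what the paper leaves implicit and is correct; the only minor imprecision is that the proof of Theorem \ref{markov} also leans on \eqref{property} and \eqref{C} (not just \eqref{ort0}, \eqref{exp0}, \eqref{part1}), but since those are themselves derived from \eqref{exp} in Section \ref{pre}, your checklist supports the remark's claim as stated.
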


Even when the integrand of \eqref{int} change sign in the interval of integration $\varphi'$ may have a constant sign in $I_\epsilon(t_0)$. We illustrate this possibility by proving the following result,  which we will use later in Section \ref{examples}. 

\begin{coro}\label{end}
Assume the hypotheses and notation of Theorem \ref{markov} and its proof. Set $\theta_0=-\pi$. Assume that $\mathrm{d}\mu(\theta)=-\mathrm{d}\mu(-\theta)$. Assume also that $\omega(-\theta; t)\geq \omega(\theta; t)$ and 
$$
\frac{1}{\omega(-\theta; t)}\frac{\partial \omega}{\partial t}(-\theta; t)=\frac{1}{\omega(\theta; t)}\frac{\partial \omega}{\partial t}(\theta; t)
$$
for almost all $\theta \in (0, \varphi(t))$ and $t \in I_\epsilon(t_0)$. Suppose that \eqref{mark} is a strictly decreasing function of $\theta$ on  $(-\pi, 0)$ and a strictly increasing function of $\theta$ on  $(0, \pi)$. Then either $\zeta(t)$ moves strictly clockwise along $\mathbb{S}^1$ as $t$ increases on $I_\epsilon(t_0)$ if $\varphi(t)\in (-\pi, 0)$ or else  $\zeta(t)$ moves strictly counterclockwise if $\varphi(t)\in (0, \pi)$.
\end{coro}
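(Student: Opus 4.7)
\emph{Proof plan.} My strategy follows the structure of the proof of Theorem \ref{markov}, substituting a pairing argument for its pointwise sign analysis. I begin by rederiving the identity \eqref{int} in the present setting. By the reflection $\theta\mapsto-\theta$ on $\mathbb{S}^1$ (which, thanks to the symmetry $\mathrm{d}\mu(\theta)=-\mathrm{d}\mu(-\theta)$, sends the problem to an analogous one with $\varphi(t)$ replaced by $-\varphi(t)$ and the inequality $\omega(-\theta)\geq\omega(\theta)$ reversed), the case $\varphi(t)\in(-\pi,0)$ is equivalent to $\varphi(t)\in(0,\pi)$ with the direction of motion reversed. So I focus on $\varphi(t)\in(0,\pi)$ and aim to show that $\zeta(t)$ moves counterclockwise, i.e., $\varphi(t)$ increases.

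The crucial new ingredient beyond Theorem \ref{markov} is the self-reciprocity $P_n^*(z)=-b\,P_n(z)$ of POPUC, which yields $|P_n(e^{-i\theta};t)|^2=|P_n(e^{i\theta};t)|^2$; in particular $|P_n|^2$ is even in $\theta$. Combined with the evenness of the base measure, this lets me pair $\theta$ with $-\theta$ in the integrand of \eqref{int} and write
\[
C(t)\varphi'(t)=\int_0^{\pi}\bigl[s(\theta)\varpi(\theta)\omega(\theta)+s(-\theta)\varpi(-\theta)\omega(-\theta)\bigr]\,|P_n(e^{i\theta};t)|^2\,\mathrm{d}\mu_0(\theta),
\]
where $\mathrm{d}\mu_0$ is the symmetric base measure. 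On the inner region $(0,\varphi(t))$, the hypothesis that \eqref{mark} satisfies $g(-\theta;t)=g(\theta;t)$ (writing $g$ for \eqref{mark}) gives $\varpi(-\theta;t)=\varpi(\theta;t)$, so the paired integrand factors as $\varpi(\theta)\bigl[s(\theta)\omega(\theta)+s(-\theta)\omega(-\theta)\bigr]|P_n|^2$. The factor $\varpi$ has a definite sign on $(0,\varphi(t))$ because $g$ is strictly increasing on $(0,\pi)$ with $g(-\varphi(t))=g(\varphi(t))$ by continuity of $g$ and the $g$-symmetry hypothesis; and the bracketed factor has a definite (opposite) sign because $s$ does not change sign on $(-\pi,\varphi(t))$ (as shown in the proof of Theorem \ref{markov}) and the $\omega$'s are positive. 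Thus the inner region contributes in one definite direction.

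For the outer region $(\varphi(t),\pi)$, where no pointwise $\varpi$-symmetry is available, the plan is to exploit the paraorthogonality identity $\int s\,|P_n|^2\,\mathrm{d}\mu=0$ (implicit in the proof of Theorem \ref{markov} through \eqref{exp}), the strict monotonicity of $g$ on both $(-\pi,0)$ and $(0,\pi)$, the inequality $\omega(-\theta)\geq\omega(\theta)$ on $(0,\varphi(t))$, and the closed-form trigonometric identity $s(\theta)+s(-\theta)=\sin\varphi(t)/(\cos\theta-\cos\varphi(t))$ (a direct computation from the explicit expression for $s$ in the proof of Theorem \ref{markov}). The principal obstacle will be to quantitatively balance the outer and inner contributions so as to conclude that their sum has the claimed definite sign: this is the essential new difficulty, since in the situation of Theorem \ref{markov} the sign of the integrand was determined pointwise, whereas here the integrand genuinely changes sign across $\theta=\varphi(t)$ in a way that the figure accompanying the proof above is meant to illustrate.
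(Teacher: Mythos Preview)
There is a genuine error in your plan. The self-reciprocity $P_n^*(z)=-b\,P_n(z)$ does \emph{not} imply $|P_n(e^{-i\theta};t)|^2=|P_n(e^{i\theta};t)|^2$. On $\mathbb{S}^1$ one has $P_n^*(e^{i\theta})=e^{in\theta}\,\overline{P_n(e^{i\theta})}$, so the identity $P_n^*=-b\,P_n$ collapses to the tautology $|P_n(e^{i\theta})|=|P_n(e^{i\theta})|$; it says nothing about $\theta\mapsto -\theta$. Evenness of $|P_n|^2$ would require $P_n(\bar z)=\lambda\,\overline{P_n(z)}$ for some unimodular $\lambda$, i.e.\ that all coefficients of $P_n$ share a common phase. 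That is guaranteed when the measure $\mathrm d\mu(\theta;t)$ itself is symmetric (real Verblunsky coefficients and $b=\pm1$), but the corollary only assumes $\omega(-\theta;t)\geq\omega(\theta;t)$, not equality, so the full measure need not be symmetric and your paired formula is not justified.

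You also have the two regions interchanged. In the paper's argument the \emph{outer} set $(-\pi,-\varphi(t))\cup(\varphi(t),\pi)$ is the easy one: there $s$ and $\varpi$ carry the same sign, so the integrand $W=s\,|P_n|^2\,\varpi\,\omega$ is pointwise nonnegative and its contribution can simply be dropped, yielding $C(t)\varphi'(t)>\int_{-\varphi(t)}^{\varphi(t)}W\,\mathrm d\mu$. The delicate work is on the \emph{inner} interval $(-\varphi(t),\varphi(t))$, where $W$ does change sign; only there is the symmetry of $\mathrm d\mu$ used to pair $\theta\leftrightarrow-\theta$, together with the explicit inequality
\[
s(-\theta;t;\cdot)=-\frac{\sin\bigl((\varphi(t)-\theta)/2\bigr)}{\sin\bigl((\varphi(t)+\theta)/2\bigr)}\,s(\theta;t;\cdot)\;<\;-\,s(\theta;t;\cdot)\qquad(\theta\in(0,\varphi(t))),
\]
which is precisely the trigonometric fact you allude to. Combined with $\varpi(-\theta;t)=\varpi(\theta;t)$ and $\omega(-\theta;t)\geq\omega(\theta;t)$ on $(0,\varphi(t))$, this is what forces $W(-\theta;t)+W(\theta;t)>0$ there. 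So the ``principal obstacle'' you foresee on $(\varphi(t),\pi)$ never materializes, and the balancing you leave open is exactly the step the paper settles via the inequality above on the inner interval.
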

\begin{proof}
Set 
$
W(\theta; t)=s(\theta; t; 0)$ $ |P_n(e^{i\theta}; t)|^2\varpi(\theta; t)\omega(\theta; t)$. Suppose that $\varphi(t) \in (0,\pi)$  for each $t \in I_\epsilon(t_0)$. Observe  that $s(\theta; t; 0)$ is positive for each $\theta \in (-\pi, 0) \cup (\varphi(t), \pi)$. Since  $\varpi(\theta; t)$ is positive for each $\theta\in (-\pi,-\varphi(t))\cup (\varphi(t), \pi)$, $W(\theta; t)$ is positive for $\theta\in (-\pi, -\varphi(t))\cup (\varphi(t), \pi)$ (see Figure $2$).  Moreover, 
$$
s(-\theta; t; 0)=-\frac{\sin \left(\dps\frac{\varphi(t)-\theta}{2}\right)}{\sin \left(\dps\frac{\varphi(t)+\theta}{2}\right)}\,s(\theta; t; 0)< -s(\theta; t; 0)
$$
 for each $\theta\in(0, \varphi(t))$. Hence
\begin{align*}
C(t)\varphi'(t)&>\int_{-\varphi(t)}^{\varphi(t)} W(\theta; t) \mathrm{d}\mu(\theta)\\
&=\int_{0}^{\varphi(t)} \big(W(-\theta; t)+W(\theta; t)\big) \mathrm{d}\mu(\theta)>0,
\end{align*}
and so $\varphi'> 0$.  
\begin{figure}[H]
\centering
\includegraphics[width=9cm]{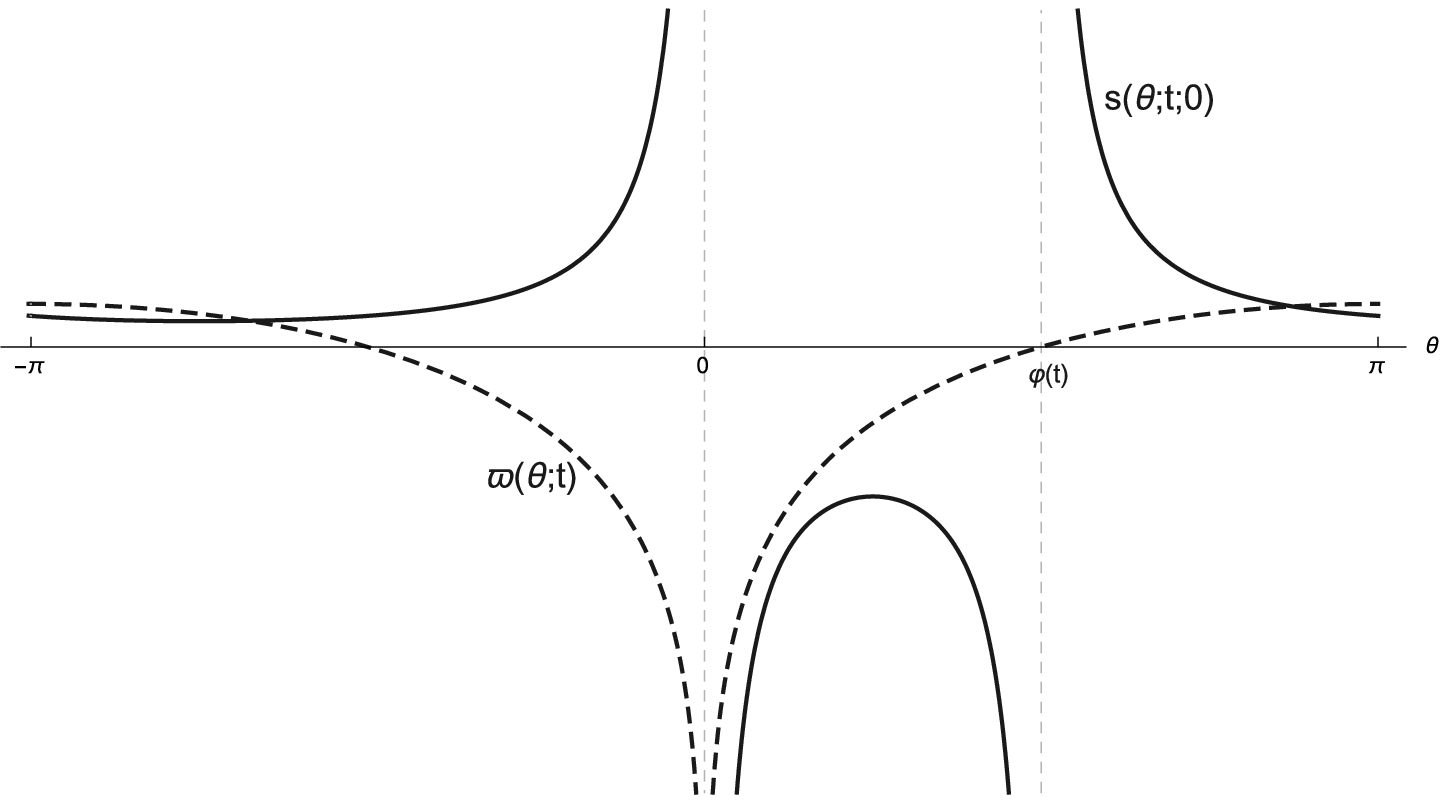}\label{proof2}
\caption{$s$ and $\varpi$ for a consequence of the circular Markov theorem with a fixed zero.}
\end{figure} 
The proof for $\varphi(t) \in (-\pi,0)$ is similar.
\end{proof}

\begin{obs}
We can go even further, however. Note that the result we want to prove is $S=I_1+I_2+I_3+I_4>0$ $($see Figure 2$)$, where
\begin{align*}
I_1&=\int_{-\pi}^{-\varphi(t)} W(\theta; t) \mathrm{d}\mu(\theta)>0, \quad &I_3&=\int_{0}^{\varphi(t)} W(\theta; t) \mathrm{d}\mu(\theta)>0,&\\
I_2&=\int_{-\varphi(t)}^{0} \,\,W(\theta; t) \mathrm{d}\mu(\theta)<0, \quad &I_4&=\int_{\varphi(t)}^{\pi}\,\, W(\theta; t) \mathrm{d}\mu(\theta)>0,&
\end{align*}
and, although under our hypothesis $I_2+I_3>0$, there may be cases in which $I_2+I_3<0$ and still $S>0$.
\end{obs}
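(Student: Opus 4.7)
The remark makes three assertions, addressed below. First, the signs $I_1,I_3,I_4>0$ and $I_2<0$ follow from the factorization $W(\theta;t)=s(\theta;t;0)\,|P_n(e^{i\theta};t)|^2\,\varpi(\theta;t)\,\omega(\theta;t)$ together with the sign tables for $s$ and $\varpi$ already obtained inside the proof of Corollary \ref{end}: the other two factors are positive, and multiplying signs on each of the four subintervals $(-\pi,-\varphi(t))$, $(-\varphi(t),0)$, $(0,\varphi(t))$, $(\varphi(t),\pi)$ yields the claimed pattern (positive on the first, third and fourth, negative on the second). Secondly, the inequality $I_2+I_3>0$ under the corollary's hypotheses is exactly the content of the symmetrization step inside that corollary: after rewriting $I_2+I_3=\int_0^{\varphi(t)}\bigl(W(-\theta;t)+W(\theta;t)\bigr)\,\mathrm{d}\mu(\theta)$ and invoking $\omega(-\theta)\ge\omega(\theta)$, the identity for $s(-\theta;t;0)/s(\theta;t;0)$ giving $|s(-\theta;t;0)|<|s(\theta;t;0)|$, and the evenness of $\varpi$ and $|P_n|^2$, the integrand becomes strictly positive; I would simply isolate and cite that chain.

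The genuinely new claim is the last one: that cases with $I_2+I_3<0$ and $S>0$ do occur. I would establish it by a perturbation argument. Start from a base pair $(\mathrm{d}\mu_0,\omega_0)$ satisfying all hypotheses of Corollary \ref{end}, chosen so that at some reference $(t_0,n)$ the positive pieces $I_1^{(0)}+I_4^{(0)}$ are much larger than $|I_2^{(0)}+I_3^{(0)}|$; this can be arranged, for instance, by concentrating the mass of $\mathrm{d}\mu_0$ on $(-\pi,-\varphi(t_0))\cup(\varphi(t_0),\pi)$. Now deform by $\omega_\lambda(\theta;t)=\omega_0(\theta;t)\bigl(1+\lambda\,\eta(\theta)\bigr)$, with $\eta\ge0$ smooth, nontrivial and compactly supported inside $(-\varphi(t_0),0)$. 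Proposition \ref{pmain} supplies a smooth zero branch $\varphi_\lambda(t)$ and the four integrals $I_j(\lambda)$ depend continuously on $\lambda$. Because $\eta$ sits where $W_0<0$, increasing $\lambda$ drives $I_2(\lambda)$ strictly downward, while $I_1$, $I_3$, $I_4$ change only through the indirect motion of $P_n$ and $\varphi_\lambda$. For $\lambda$ large enough, $I_2(\lambda)+I_3(\lambda)<0$; by the intermediate value theorem there is an intermediate $\lambda_\ast$ at which $I_2(\lambda_\ast)+I_3(\lambda_\ast)=0$ while $S(\lambda_\ast)=I_1(\lambda_\ast)+I_4(\lambda_\ast)>0$ by construction, and a short continuity step past $\lambda_\ast$ produces the desired pair of inequalities.

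The main obstacle is the quantitative control of $I_1,I_3,I_4$ as $\lambda$ grows: these are not integrated against $\eta$, but the motion of $P_n$ and $\varphi_\lambda$ introduces an indirect dependence on $\lambda$ that must be bounded below the loss suffered by $I_2$. A cleaner alternative that sidesteps this is to exhibit a fully explicit low-degree example---say a second-degree POPUC with $\mathrm{d}\mu=\mathrm{d}\theta$ and an asymmetric one-parameter weight of the form $\omega(\theta;t)=1+t\,f(\theta)$ for a carefully chosen asymmetric $f$---and verify $I_2+I_3<0<S$ by direct computation. Either route confirms that the symmetry condition of Corollary \ref{end}, while sufficient, is not necessary for the conclusion $S>0$.
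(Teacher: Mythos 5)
First, be aware that the paper offers no proof of this remark at all: it is commentary appended to Corollary \ref{end}, and the only parts of it that are substantiated anywhere are the sign pattern of the $I_j$ and the inequality $I_2+I_3>0$, both of which are read off from the proof of that corollary exactly as you do. Your sign bookkeeping on the four subintervals is correct and is precisely the paper's implicit justification. One inaccuracy in your recap of the symmetrization step: you invoke ``the evenness of $\varpi$ and $|P_n|^2$'', but $|P_n(e^{i\theta};t)|^2$ is not even in general, since $P_n$ is the POPUC of $\omega(\theta;t)\,\mathrm{d}\mu(\theta)$ and only $\mathrm{d}\mu$, not $\omega(\cdot;t)$, is assumed symmetric; the hypotheses of Corollary \ref{end} give evenness of $\varpi$ on $(-\varphi(t),\varphi(t))$ and the comparison $\omega(-\theta;t)\geq\omega(\theta;t)$, nothing about $|P_n|$.

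Second, the only genuinely new assertion---that there may be cases with $I_2+I_3<0$ and still $S>0$---is stated in the paper as a mere possibility and is not proved there, so you are attempting more than the paper does; that is legitimate, but your perturbation argument does not close. (i) Once you multiply $\omega_0$ by $1+\lambda\eta$ with $\eta$ supported asymmetrically in $(-\varphi(t_0),0)$, you leave the hypothesis class of Corollary \ref{end}: the logarithmic $t$-derivative is no longer even near $0$ and \eqref{mark} need not remain monotone on $(-\pi,0)$, so the sign pattern of $\varpi$, hence of the $I_j$, is no longer guaranteed. (ii) The ``indirect'' dependence of $I_1$, $I_3$, $I_4$, of $\varphi_\lambda$ and of the normalizing factor $C(t)$ on $\lambda$ is exactly what would need to be bounded below the loss in $I_2$, and you acknowledge you do not bound it. (iii) The intermediate value theorem only produces $\lambda_*$ with $I_2(\lambda_*)+I_3(\lambda_*)=0$, and the ``short continuity step past $\lambda_*$'' presumes $S$ stays positive there, which is again uncontrolled. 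Your proposed alternative---an explicit low-degree example verified by direct computation---is the right way to make the existential claim rigorous, but as written it is a plan rather than a proof. Since the remark itself only claims ``there may be cases'', matching the paper requires nothing more than the observation that $I_1+I_4>0$ leaves room for $I_2+I_3$ to be negative while $S>0$; anything beyond that is optional, but if included it must be completed.
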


\begin{obs}\label{remarkkernel}
POPUC with a fixed zero $($see for instance \cite[(2.11-2.13)]{DG88}$)$ are widely used in practice. This collection of polynomials is closely related with certain CD kernels. Indeed, given $\xi \in \mathbb{S}^1$ and  a measure $\mathrm{d}\mu$ as defined in Section \ref{pre}, the corresponding normalized POPUC of degree $n$ with parameter 
\begin{align}\label{b}
b(\xi)=\overline{\xi}\, \frac{\overline{q_{n-1}(\xi)}}{\overline{q^*_{n-1}(\xi)}}
\end{align}
is given by $($see \cite[(3.7)-(3.8)]{W07} and \cite[Theorem $2.14.3.$ (ii)]{S11}$)$
$$
p_n\left(z; b(\xi); \mathrm{d}\mu \right)=-\frac{\overline{b(\xi)}}{q^*_{n-1}(\xi)} (1-z\overline{\xi})K_{n-1}(\xi, z).
$$
Therefore, the zeros of $K_{n-1}(\xi, \cdot)$ are precisely the zeros of  $p_n\left(\cdot; b(\xi); \mathrm{d}\mu\right)$ other than $\xi$, and the zeros of $p_n\left(\cdot; b(\xi); \mathrm{d}\mu\right)$ are $\xi$ plus the zeros of $K_{n-1}(\xi, \cdot)$. 
\end{obs}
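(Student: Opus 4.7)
The statement has two parts: the closed-form identity
\[
p_n(z; b(\xi); \mathrm{d}\mu) = -\frac{\overline{b(\xi)}}{q^*_{n-1}(\xi)}(1 - z\overline{\xi})K_{n-1}(\xi, z),
\]
and the resulting correspondence between the zeros of $p_n(\,\cdot\,; b(\xi); \mathrm{d}\mu)$ and those of $K_{n-1}(\xi, \cdot)$. The plan is to verify the identity by a direct Christoffel--Darboux computation on the unit circle, and then read off the zero correspondence.

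First, I would derive a closed form for $(1-z\overline{\xi})K_{n-1}(\xi, z)$ by feeding the Szegő recurrence
\[
q_n(z) = r_{n-1}^{-1}\bigl(zq_{n-1}(z) - \overline{a}_{n-1}q^*_{n-1}(z)\bigr), \qquad q^*_n(z) = r_{n-1}^{-1}\bigl(q^*_{n-1}(z) - a_{n-1}zq_{n-1}(z)\bigr)
\]
into the standard OPUC CD formula $(1-z\overline{\xi})K_{n-1}(\xi, z) = \overline{q^*_n(\xi)}q^*_n(z) - \overline{q_n(\xi)}q_n(z)$. After expansion the cross terms proportional to $a_{n-1}$ and $\overline{a}_{n-1}$ cancel, the common factor $(1-|a_{n-1}|^2)$ divides through, and one is left with the reduced form
\[
(1-z\overline{\xi})K_{n-1}(\xi, z) = \overline{q^*_{n-1}(\xi)}\,q^*_{n-1}(z) - z\overline{\xi}\,\overline{q_{n-1}(\xi)}\,q_{n-1}(z).
\]

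Second, I would match this with $p_n(z; b(\xi); \mathrm{d}\mu) = zq_{n-1}(z) - \overline{b(\xi)}q^*_{n-1}(z)$. The key tool here is the identity $q^*_{n-1}(\xi) = \xi^{n-1}\overline{q_{n-1}(\xi)}$ valid for $\xi \in \mathbb{S}^1$, which also guarantees $q^*_{n-1}(\xi) \neq 0$ since all zeros of $q_{n-1}$ lie in $\mathbb{D}$. Substituting this into the right-hand side of the displayed formula and using $b(\xi) = \overline{\xi}\,\overline{q_{n-1}(\xi)}/\overline{q^*_{n-1}(\xi)}$ to recognize the factors, the bracket collapses to a scalar multiple of $zq_{n-1}(z) - \overline{b(\xi)}q^*_{n-1}(z)$. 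I would pin down the scalar by comparing the coefficient of $z^n$ on both sides: the LHS has leading coefficient $\kappa_{n-1}$, while the RHS contributes $-\overline{\xi}\,\overline{q_{n-1}(\xi)}\kappa_{n-1}$ times the prefactor, forcing the prefactor to be $-\overline{b(\xi)}/q^*_{n-1}(\xi)$ as claimed.

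Once the identity is in hand, the statement about zeros is immediate: both sides are polynomials of degree $n$ in $z$ proportional by a nonzero constant, so their zero sets coincide. The factor $(1-z\overline{\xi})$ contributes the single zero $z = \xi$, and $\xi$ is \emph{not} a zero of $K_{n-1}(\xi, \cdot)$ because $K_{n-1}(\xi,\xi) = \sum_{j=0}^{n-1}|q_j(\xi)|^2 \geq |q_0|^2 > 0$. Hence the $n$ zeros of $p_n(\,\cdot\,; b(\xi); \mathrm{d}\mu)$ are exactly $\xi$ together with the $n-1$ zeros of $K_{n-1}(\xi, \cdot)$.

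The main obstacle is the bookkeeping in the second step: tracking the conjugations and powers of $\xi$ to arrive at the precise prefactor $-\overline{b(\xi)}/q^*_{n-1}(\xi)$. A cleaner fallback, sufficient for the zero-correspondence used in Remark \ref{remarkkernel} and in the sequel, is to skip the explicit constant: verify only that $p_n(\,\cdot\,; b(\xi); \mathrm{d}\mu)$ and $(1-z\overline{\xi})K_{n-1}(\xi, z)$ are both nonzero polynomials of degree exactly $n$ in $z$ vanishing at $\xi$ and collinear in the one-dimensional space of such polynomials, from which the zero correspondence follows without computing the scalar.
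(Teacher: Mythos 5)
The paper offers no proof of this remark at all---it is justified purely by citation of Wong and Simon---so your Christoffel--Darboux derivation is a genuine self-contained argument rather than a variant of the paper's. Its architecture is sound: the mixed CD identity
\[
(1-z\overline{\xi})K_{n-1}(\xi,z)=\overline{q^*_{n-1}(\xi)}\,q^*_{n-1}(z)-z\overline{\xi}\,\overline{q_{n-1}(\xi)}\,q_{n-1}(z)
\]
is correct (your cancellation of the $a_{n-1}$ cross terms checks out), and since $q^*_{n-1}(\xi)=\xi^{n-1}\overline{q_{n-1}(\xi)}\neq0$ on $\mathbb{S}^1$, the ratio $\overline{q^*_{n-1}(\xi)}/(\overline{\xi}\,\overline{q_{n-1}(\xi)})$ is exactly $\overline{b(\xi)}$, so the right-hand side is $-\overline{\xi}\,\overline{q_{n-1}(\xi)}\,p_n(z;b(\xi);\mathrm{d}\mu)$. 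The zero correspondence then follows as you say from $K_{n-1}(\xi,\xi)>0$, $q_{n-1}(\xi)\neq0$ and $q^*_{n-1}(\xi)\neq0$.

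The one step that does not survive scrutiny is your claim that the leading-coefficient comparison forces the prefactor to be $-\overline{b(\xi)}/q^*_{n-1}(\xi)$ ``as claimed''. What the comparison actually forces is
\[
p_n(z;b(\xi);\mathrm{d}\mu)=-\frac{1}{\overline{\xi}\,\overline{q_{n-1}(\xi)}}\,(1-z\overline{\xi})K_{n-1}(\xi,z)=-\frac{\xi^{\,n}}{q^*_{n-1}(\xi)}\,(1-z\overline{\xi})K_{n-1}(\xi,z),
\]
and this agrees with the displayed constant only when $\overline{b(\xi)}=\xi^{\,n}$ (e.g.\ for $\mathrm{d}\theta/2\pi$); in general the two constants differ by the unimodular factor $\overline{b(\xi)\xi^{\,n}}$. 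A concrete check: for the degree-one Bernstein--Szeg\H{o} measure with $q_1(z)=(1-r^2)^{-1/2}(z-r)$, $r=1/2$, $\xi=i$, the true constant is $\sqrt{3}(2+i)/5$ while $-\overline{b(\xi)}/q^*_1(\xi)=\sqrt{3}(2+11i)/25$, and only the former reproduces the leading coefficient $\kappa_1$ of $p_2$. So either record the constant your computation actually yields (which suggests the constant displayed in the remark is a typo or a convention mismatch with the cited sources), or use your fallback. That fallback is the right instinct---only proportionality by a nonzero constant is used anywhere in the sequel, and your first computation already delivers it explicitly---but justify it via the displayed linear combination rather than via ``the one-dimensional space'' of degree-$n$ polynomials vanishing at $\xi$, since that space has dimension $n$; the one-dimensionality you want is that of the family of POPUC of degree $n$ with a prescribed zero, which is a separate (true) fact.
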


With Theorem \ref{markov} under our belt, the following consequence essentially follows as for the case of OPRL (see \cite[Theorem 6.12.2]{S75}). 

\begin{coro}\label{comp}
Let $\mathrm{d}\mu_1(\theta)=\omega_1(\theta)\, \mathrm{d}\mu(\theta)$ and $\mathrm{d}\mu_2(\theta)=\omega_2(\theta)\, \mathrm{d}\mu(\theta)$  be two nonnegative measures with infinite support on the unit circle parametrized by $z=e^{i\theta}$ $(\theta \in [\theta_0, \theta_0+2\pi))$ and satisfying the hypotheses of Theorem \ref{markov}. Suppose that $\omega_1(\theta)$ and $\omega_2(\theta)$ are finite, positive and continuous for almost all $\theta$. Let $\omega_2(\theta)/\omega_1(\theta)$ be a strictly increasing on  $[\theta_0, \theta_0+2\pi)$. Fix $n\geq2$ and let $\theta_0+2\pi> \theta_{1,1}>\cdots >\theta_{n,1}\geq \theta_0$ and $\theta_0+2\pi> \theta_{1,2}>\cdots >\theta_{n,2}\geq \theta_0$ denote the arguments of the zeros of the POPUC of degree $n$ associated with $\mathrm{d}\mu_1$ and $\mathrm{d}\mu_2$, respectively. Then if $\theta_{k,1}=\theta_{k,2}$ for some $k\in \{1,\dots, n\}$, we have
$$
\theta_{j,1}<\theta_{j,2}
$$ 
for each $j\not=k$.
\end{coro}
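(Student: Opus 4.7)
The plan is to interpolate between $\mathrm{d}\mu_1$ and $\mathrm{d}\mu_2$ by a smooth one-parameter family of measures, choose at each moment the parameter of the POPUC so that the common zero remains fixed, and then invoke Theorem \ref{markov} to transport the remaining $n-1$ zeros from their positions at $\mathrm{d}\mu_1$ to their positions at $\mathrm{d}\mu_2$. Concretely, I would set $\xi:=e^{i\theta_{k,1}}=e^{i\theta_{k,2}}$ and define
$$
\omega(\theta; t):=\omega_1(\theta)^{1-t}\,\omega_2(\theta)^{t},\qquad \mathrm{d}\mu(\theta; t):=\omega(\theta; t)\,\mathrm{d}\mu(\theta),\qquad t\in(-\epsilon_0, 1+\epsilon_0),
$$
for a small $\epsilon_0>0$. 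Continuity and positivity of $\omega_1,\omega_2$, together with the compactness of $\mathbb{S}^1$, ensure all regularity hypotheses of Proposition \ref{pmain} and Theorem \ref{markov}; a direct calculation gives
$$
\frac{1}{\omega(\theta; t)}\frac{\partial\omega}{\partial t}(\theta; t)=\log\frac{\omega_2(\theta)}{\omega_1(\theta)},
$$
which is strictly increasing in $\theta$ by hypothesis.

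Next, for each $t$ I would pick the parameter $b(t)\in\mathbb{S}^1$ according to formula \eqref{b} of Remark \ref{remarkkernel} applied to $\mathrm{d}\mu(\cdot; t)$, so that the POPUC $P_n(\cdot; b(t); \mathrm{d}\mu(\cdot; t))$ vanishes at $\xi$ for every $t$. The dependence $t\mapsto b(t)$ is smooth because the moments of $\mathrm{d}\mu(\cdot; t)$ depend smoothly on $t$ (as in the proof of Proposition \ref{pmain}) and $q_{n-1}^{*}(\xi; \mathrm{d}\mu(\cdot; t))\ne 0$. At $t=0$ and $t=1$ this POPUC must be precisely the one appearing in the statement, since a POPUC of degree $n$ is uniquely determined by its measure together with any one of its zeros: knowing that it vanishes at $\xi$ forces $b(0)$ and $b(1)$ to be the parameters associated with the prescribed POPUC of $\mathrm{d}\mu_1$ and $\mathrm{d}\mu_2$, respectively.

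Finally, I would apply Theorem \ref{markov} around each $t\in[0,1]$ and each non-fixed zero to obtain, locally, smooth paths $t\mapsto e^{i\varphi_j(t)}$ along which $\varphi_j$ varies strictly monotonically. Since the zeros of any POPUC are simple and distinct on $\mathbb{S}^1$, these local paths cannot collide with each other nor with $\xi$, so they glue into continuous functions $[0,1]\to\mathbb{S}^1\setminus\{\xi\}$ which preserve the cyclic order and satisfy $e^{i\varphi_j(0)}=e^{i\theta_{j,1}}$ and $e^{i\varphi_j(1)}=e^{i\theta_{j,2}}$ for $j\ne k$; integrating the strict monotonicity on $[0,1]$ then yields $\theta_{j,1}<\theta_{j,2}$. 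The main obstacle I anticipate lies in this last step: upgrading the purely local, implicit-function-theorem content of Theorem \ref{markov} to a global monotonicity on the whole interval $[0,1]$, which requires showing that each zero trajectory stays uniformly bounded away from $\xi$ and from every other trajectory as $t$ varies, so that the local identifications splice into globally defined continuous paths while keeping the direction of motion (and hence the correct sign of the inequality $\theta_{j,1}<\theta_{j,2}$) consistent throughout the deformation.
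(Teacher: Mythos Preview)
Your approach is essentially the paper's: interpolate between $\omega_1$ and $\omega_2$, pin the common zero $\xi=e^{i\theta_{k,1}}$ via Remark~\ref{remarkkernel}, and apply Theorem~\ref{markov}. The only difference is cosmetic: the paper uses the linear interpolation $\omega(\theta;t)=(1-t)\,\omega_1(\theta)+t\,\omega_2(\theta)$ (for which the domination bound $|\partial_t\omega|\le\omega_1+\omega_2$ is immediate and the logarithmic derivative is still a monotone function of $\omega_2/\omega_1$) instead of your geometric one, and it leaves the local-to-global continuation you spell out implicit.
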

\begin{proof}
Define $\mathrm{d}\sigma(\theta; t)=\omega(\theta; t) \mathrm{d}\mu(\theta)$, where $\omega(\theta; t)=(1-t)\omega_1(\theta)+t\, \omega_2(\theta)$ and $t\in [0,1]$. Now one has that $\omega(\theta; t)$ is finite  for almost all $\theta \in [\theta_0, \theta_0+2\pi)$ and admits partial derivative with respect to $t$ by construction; moreover, since
$$
\left|\frac{\partial\omega}{\partial t}(\theta; t)\right|\leq \omega_1(\theta)+\omega_2(\theta),
$$
$\mathrm{d}\sigma$ satisfies the hypotheses of Proposition \ref{pmain}. By virtue of Remark \ref{remarkkernel}, we can always construct a POPUC of degree $n$ associated with $\mathrm{d}\sigma$ with a zero at $\theta_{k,1}$. We also see that
$$
\frac{1}{\omega(\theta; t)}\frac{\partial \omega}{\partial t}(\theta; t)=\frac{1}{t}+\frac{\dps \frac{1}{t}}{1-t+\dps \frac{\omega_2(\theta)}{\omega_1(\theta)}\, t}
$$
is a strictly increasing function of $\theta$ on  $[\theta_0, \theta_0+2\pi)$  for each $t\in(0,1)$ . Finally, since $\omega(\theta; 0)=\omega_1(\theta)$ and $\omega(\theta; 1)=\omega_2(\theta)$, the result is a consequence of Theorem \ref{markov}.
\end{proof}

We will use the same arguments, as in the proof of Theorem \ref{markov}, to prove the next theorem, which we will refer as circular Markov theorem for complex conjugate zeros. 

\begin{theorem}\label{markoveven}
 Assume the hypotheses and notation of Theorem \ref{markov} and its proof, except that $P(e^{i\theta_0}; t)=0$. Set $\theta_0=-\pi$. Suppose that $P(\overline{\zeta(t)}; t)=0$ and $\varphi(t)\in (0,\pi)$ $({\rm mod[-\pi, \pi)})$  for each $t \in I_\epsilon(t_0)$. Then $\zeta(t)$ moves strictly counterclockwise along $\mathbb{S}^1$ as $t$ increases on $I_\epsilon(t_0)$, provided that \eqref{mark} is a strictly decreasing function of $\theta$ on  $(-\pi, 0)$ and a strictly increasing function of $\theta$ on $(0, \pi)$.
\end{theorem}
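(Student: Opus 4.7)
The plan is to mimic the proof of Theorem \ref{markov}, replacing the fixed zero $\xi=e^{i\theta_0}$ by the moving conjugate zero $\eta(t):=\overline{\zeta(t)}=e^{-i\varphi(t)}$. Since both $\zeta(t)$ and $\eta(t)$ are zeros of $P_n(\cdot;t)$, the polynomial $z\,P_n(z;t)/[(z-\zeta(t))(z-\eta(t))]$ is of degree $n-1$ and vanishes at the origin, so \eqref{exp} still yields the analog of \eqref{exp0}, which via the identity $e^{i\theta}/[(e^{i\theta}-e^{i\varphi})(e^{i\theta}-e^{-i\varphi})]=1/[2(\cos\theta-\cos\varphi)]$ reduces to $\int|P_n|^2/(\cos\theta-\cos\varphi(t))\,\mathrm{d}\mu(\theta;t)=0$.

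The analog of \eqref{ort0} fails because $\eta(t)$ is no longer fixed; one finds instead $\int(\overline{P_n}/\overline{e^{i\theta}-\eta})\,\partial_tP_n\,\mathrm{d}\mu=-D_\eta\,\eta'(t)$, with $D_\eta:=\int|P_n/(e^{i\theta}-\eta)|^2\mathrm{d}\mu>0$. Writing \eqref{derivative1} for both $\zeta$ and $\eta$, multiplying by $-i\overline{\zeta(t)}$ and $-i\overline{\eta(t)}$ respectively, adding, and using $\eta'=\overline{\zeta'}$ together with the trivial identities $\overline{\zeta}\zeta'=i\varphi'$ and $\zeta\eta'=-i\varphi'$, both $\varphi'$-contributions gather on the left and produce the analog of \eqref{newder}:
$$(D_\zeta+D_\eta)\,\varphi'(t)=i(\eta-\zeta)\int\frac{e^{i\theta}\,\overline{P_n}\,\partial_tP_n}{(e^{i\theta}-\zeta)(e^{i\theta}-\eta)}\,\mathrm{d}\mu.$$
Next, differentiating the \eqref{exp0}-analog in $t$ and invoking \eqref{exp} to annihilate the $\partial_tP_n$-terms (note that $e^{\pm i\theta}\partial_tP_n$ is a polynomial of degree at most $n-1$ vanishing at the origin), the $\zeta'$- and $\eta'$-derivatives of the moving denominators recombine, via the same identities, into a further multiple of $(D_\zeta+D_\eta)\varphi'(t)$ which is absorbed on the left. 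Using the factorization $|P_n|^2=4|R|^2(\cos\theta-\cos\varphi)^2$, where $R(z;t):=P_n(z;t)/[(z-\zeta)(z-\overline{\zeta})]$ is of degree $n-2$, and subtracting the constant $\omega^{-1}\partial_t\omega(\varphi(t);t)$ (legitimate since $\int|R|^2(\cos\theta-\cos\varphi)\,\mathrm{d}\mu=0$), this leads to
$$(D_\zeta+D_\eta)\,\varphi'(t)=4\sin\varphi(t)\int|R|^2(\cos\varphi-\cos\theta)\,\varpi\,\mathrm{d}\mu,\qquad \varpi(\theta;t):=\tfrac{1}{\omega(\theta;t)}\tfrac{\partial\omega}{\partial t}(\theta;t)-\tfrac{1}{\omega(\varphi;t)}\tfrac{\partial\omega}{\partial t}(\varphi;t).$$

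The sign of the right-hand side is then analyzed as in Corollary \ref{end}. On $(0,\pi)$ the strict increasingness of \eqref{mark} forces $\varpi$ and $\cos\varphi-\cos\theta$ to vanish simultaneously at $\theta=\varphi$ and to share the sign of $\theta-\varphi$, so their product is nonnegative and strictly positive off $\{\varphi\}$. On $(-\pi,0)$ the strict decreasingness hypothesis produces the mirror picture around $\theta=-\varphi$; after pairing $\theta\leftrightarrow -\theta$ in the spirit of Corollary \ref{end}, the $(-\pi,0)$-contribution is also strictly positive. Combined with $\sin\varphi(t)>0$, this gives $\varphi'(t)>0$, as required. The main obstacle lies precisely in this final step: controlling the $(-\pi,0)$-contribution when the reference value of $\varpi$ is anchored at $\theta=\varphi\in(0,\pi)$ on the opposite side of the circle, which forces the delicate pairing argument familiar from Corollary \ref{end}.
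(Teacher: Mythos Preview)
Your overall strategy---combine the two copies of \eqref{derivative1} at $\zeta(t)$ and $\overline{\zeta(t)}$, exploit that
$e^{i\theta}/[(e^{i\theta}-\zeta)(e^{i\theta}-\overline{\zeta})]=1/[2(\cos\theta-\cos\varphi)]$, and reduce to a sign analysis of
$(\cos\varphi-\cos\theta)\,\varpi(\theta;t)$---is exactly the paper's. Two points of execution differ.

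\textbf{The differentiation step.} You try to kill the term $\int\frac{e^{i\theta}}{(e^{i\theta}-\zeta)(e^{i\theta}-\eta)}\,\partial_tP_n\,\overline{P_n}\,\mathrm{d}\mu$ by invoking \eqref{exp} with $g=e^{\pm i\theta}\partial_tP_n$, and then chase the leftover $\zeta',\eta'$-terms separately. The parenthetical justification is wrong: $z\,\partial_tP_n$ has degree up to $n$, and $z^{-1}\partial_tP_n$ is not even a polynomial. The paper avoids all of this by the single observation that
\[
h(z;t)=\frac{z\,P_n(z;t)}{(z-\zeta(t))(z-\overline{\zeta(t)})}
\]
is, for each $t$, a polynomial of degree $n-1$ vanishing at the origin, hence so is $\partial_t h$. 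Therefore $\int \overline{P_n}\,\partial_t h\,\mathrm{d}\mu=0$ by the conjugate of \eqref{exp}, and differentiating the analog of \eqref{exp0} gives \eqref{last} directly---the $\zeta'$- and $\overline{\zeta'}$-contributions are absorbed in $\partial_t h$ and never appear. No ``recombination into a further multiple of $(D_\zeta+D_\eta)\varphi'$'' is needed.

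\textbf{The sign analysis on $(-\pi,0)$.} This is where your argument has a genuine gap. You appeal to a $\theta\leftrightarrow-\theta$ pairing ``in the spirit of Corollary~\ref{end}'', but Corollary~\ref{end} relies on the extra hypotheses $\mathrm{d}\mu(\theta)=-\mathrm{d}\mu(-\theta)$, $\omega(-\theta;t)\geq\omega(\theta;t)$, and $\omega^{-1}\partial_t\omega(-\theta;t)=\omega^{-1}\partial_t\omega(\theta;t)$, none of which are assumed in Theorem~\ref{markoveven}. Without them the pairing does not go through. The paper's proof does not pair at all: it argues that the factor $1/[2(\cos\theta-\cos\varphi)]$ is negative on $(-\pi,-\varphi)\cup(\varphi,\pi)$ and positive on $(-\varphi,\varphi)$, while $\varpi(\theta;t)$ has the opposite sign on each of these arcs, so the integrand $\frac{e^{i\theta}}{(e^{i\theta}-\zeta)(e^{i\theta}-\overline{\zeta})}\,|P_n|^2\,\varpi$ has one sign on all of $(-\pi,\pi)$ and the conclusion follows at once. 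In your reformulation this is just the statement that $(\cos\varphi-\cos\theta)$ and $\varpi(\theta;t)$ share the same sign on each arc---no comparison between $\theta$ and $-\theta$ is invoked.
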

\begin{proof}
Replacing $\zeta(t)$ in \eqref{derivative1} by $\overline{\zeta(t)}$, we get
\begin{align}\label{derivative2}
\overline{\zeta'(t)}=-\dps \frac{\dps \int  \frac{\overline{P_n(e^{i \theta}; t)}}{\overline{e^{i \theta}-\overline{\zeta(t)}}} \dps \frac{\partial P_n}{\partial t}(e^{i\theta}; t)\mathrm{d}\mu(\theta; t)}{\dps \int  \left|\frac{P_n(e^{i \theta}; t)}{e^{i \theta}-\overline{\zeta(t)}}\right|^2\mathrm{d}\mu(\theta; t)}.
\end{align}
Now let $C(t)$ denotes the sum of the denominators of the right hand sides of \eqref{derivative1} and \eqref{derivative2}. Note that $-i\overline{\zeta(t)}\zeta'(t)=\varphi'(t)=i\zeta(t)\overline{\zeta'(t)}$ and 
$$
\frac{i\overline{\zeta(t)}}{z-\overline{\zeta(t)}}-\frac{i\zeta(t)}{z-\zeta(t)}=2 \,\Ima(\zeta(t)) \frac{z}{(z-\overline{\zeta(t)})(z-\zeta(t))}.
$$
If \eqref{derivative1} and \eqref{derivative2} are multiplied by $-i\overline{\zeta(t)}$ and $i\zeta(t)$ respectively and the resulting equations are added, we have
\begin{align}\label{import}
C(t) \varphi'(t)=-2\, \Ima(\zeta(t)) \int \frac{e^{i\theta}}{(e^{i\theta}-\zeta(t))(e^{i\theta}-\overline{\zeta(t)})} P_n(e^{i\theta}; t) \overline{\frac{\partial P_n}{\partial t}(e^{i\theta}; t)}\mathrm{d}\mu(\theta; t).
\end{align}
Replacing $\xi$ in \eqref{last0} by $\overline{\zeta(t)}$, we get
\begin{align*}
&\int \frac{e^{i\theta}}{(e^{i\theta}-\zeta(t))(e^{i\theta}-\overline{\zeta(t)})} P_n(e^{i\theta}; t) \overline{\frac{\partial P_n}{\partial t}(e^{i\theta}; t)}\mathrm{d}\mu(\theta; t)\\[7pt]
\nonumber &=-\int  \frac{e^{i\theta}}{(e^{i\theta}-\zeta(t))(e^{i\theta}-\overline{\zeta(t)})}  |P_n(e^{i\theta}; t)|^2\,\frac{\partial \omega}{\partial t}(\theta; t) \mathrm{d}\mu(\theta).
\end{align*}
Replacing $\xi$ in \eqref{last0} by $\overline{\zeta(t)}$, we obtain
\begin{align}
\label{last}&\int \frac{e^{i\theta}}{(e^{i\theta}-\zeta(t))(e^{i\theta}-\overline{\zeta(t)})} P_n(e^{i\theta}; t) \overline{\frac{\partial P_n}{\partial t}(e^{i\theta}; t)}\mathrm{d}\mu(\theta; t)\\[7pt]
\nonumber &=-\int  \frac{e^{i\theta}}{(e^{i\theta}-\zeta(t))(e^{i\theta}-\overline{\zeta(t)})}  |P_n(e^{i\theta}; t)|^2\,\varpi(\theta; t) \mathrm{d}\mu(\theta; t).
\end{align}
Substituting \eqref{last} into \eqref{import}, we can assert that
\begin{align*}
\varphi'(t)=-2\, \dps \frac{\Ima(\zeta(t))}{C(t)} \int  \frac{e^{i\theta}}{(e^{i\theta}-\zeta(t))(e^{i\theta}-\overline{\zeta(t)})}  |P_n(e^{i\theta}; t)|^2\,\varpi(\theta; t) \mathrm{d}\mu(\theta; t).
\end{align*}
\begin{figure}[H]
\centering
\includegraphics[width=8cm]{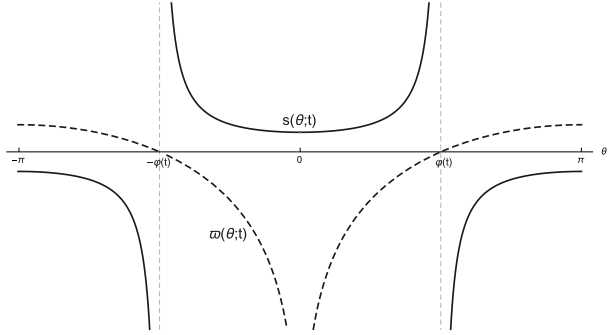}\label{proof2}
\caption{$s$ and $\varpi$ for the circular Markov theorem for complex conjugate zeros.}
\end{figure} 

Observe that, for  each $t \in I_\epsilon(t_0)$, the real-valued function
$$
\frac{e^{i\theta}}{(e^{i\theta}-\zeta(t))(e^{i\theta}-\overline{\zeta(t)})}=\frac{1}{2}\frac{1}{\cos\theta-\cos\varphi(t)},
$$
 is negative for $\theta\in(-\pi, -\varphi(t))\cup (\varphi(t), \pi)$ and positive for $\theta\in (-\varphi(t), \varphi(t))$. Since, for each $t \in I_\epsilon(t_0)$, $\varpi(\theta; t)$ is positive for $\theta\in(-\pi, -\varphi(t))\cup (\varphi(t), \pi)$ and negative for $\theta\in (-\varphi(t), 0) \cup (0, \varphi(t))$, $\varphi'(t)$ is positive (see Figure $3$), which proves the theorem.
\end{proof}

\begin{obs}
If $\mathrm{d}\mu(\theta; t)=-\mathrm{d}\mu(-\theta; t)$ for each $\theta \in [-\pi, \pi)$ $($symmetric measure$)$ and $b=\pm1$, then the nonreal zeros of the corresponding POPUC occur in complex conjugate pairs. Under this more restrictive condition, Theorem \ref{markoveven} `coincides' with Corollary \ref{end}  in case of $\omega(-\theta; t)=\omega(\theta; t)$.
\end{obs}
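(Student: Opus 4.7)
The remark has two assertions to justify: (i) for a symmetric measure $\mathrm{d}\mu(\cdot; t)$ and $b=\pm 1$, the nonreal zeros of the corresponding POPUC come in complex conjugate pairs; (ii) when one additionally imposes $\omega(-\theta; t)=\omega(\theta; t)$, the statement of Theorem \ref{markoveven} reduces to (``coincides with'') that of Corollary \ref{end}. My plan is to prove (i) by showing that the POPUC in question actually has real coefficients, and then to obtain (ii) by inspecting the two hypothesis lists and checking that they collapse onto each other under the additional evenness.

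For (i), I would first note that the symmetry $\mathrm{d}\mu(\theta; t)=-\mathrm{d}\mu(-\theta; t)$ combined with the change of variable $\theta\mapsto -\theta$ gives
\begin{align*}
c_j(\mathrm{d}\mu(\cdot; t)) = \int e^{-ij\theta}\,\mathrm{d}\mu(\theta; t) = \int e^{ij\theta}\,\mathrm{d}\mu(\theta; t) = c_{-j}(\mathrm{d}\mu(\cdot; t)),
\end{align*}
which together with the convention $c_{-j}=\overline{c_j}$ forces every moment to be real. Heine's formula \eqref{heine} then delivers $Q_{n-1}(\cdot; \mathrm{d}\mu(\cdot; t))\in\mathbb{R}[z]$, whence also $Q^*_{n-1}(z)=z^{n-1}Q_{n-1}(1/z)\in\mathbb{R}[z]$. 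With $\overline{b}=b=\pm 1$, the definition \eqref{popuc} produces $P_n(\cdot;\pm 1;\mathrm{d}\mu(\cdot; t))\in\mathbb{R}[z]$, and the nonreal roots of a real-coefficient polynomial come in complex conjugate pairs. Since these roots already lie on $\mathbb{S}^1$, conjugation coincides with inversion $\zeta\mapsto 1/\zeta$, giving the claimed pairing.

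For (ii), under the additional assumption $\omega(-\theta; t)=\omega(\theta; t)$ the two hypothesis lists become identical: the weak inequality $\omega(-\theta; t)\geq \omega(\theta; t)$ and the logarithmic-derivative identity in Corollary \ref{end} both hold trivially with equality; part (i) supplies the hypothesis $P(\overline{\zeta(t)}; t)=0$ of Theorem \ref{markoveven} automatically as soon as $\zeta(t)\notin\mathbb{R}$; and the monotonicity assumption on \eqref{mark} together with the conclusion that $\zeta(t)$ moves strictly counterclockwise for $\varphi(t)\in(0,\pi)$ appear verbatim in both results. The only substantive step is the moment computation in (i), and the main obstacle I anticipate is merely a careful reading of the sign convention in $\mathrm{d}\mu(\theta)=-\mathrm{d}\mu(-\theta)$ so that the change of variable and the rule $c_{-j}=\overline{c_j}$ combine to force $c_j\in\mathbb{R}$ rather than $c_j\in i\mathbb{R}$; once that is settled, everything else is bookkeeping.
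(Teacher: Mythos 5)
Your argument is correct, and in fact the paper states this remark without any proof, so you are supplying the justification it omits: the symmetry forces $c_j=c_{-j}=\overline{c_j}\in\mathbb{R}$, Heine's formula \eqref{heine} then gives $Q_{n-1}$ (hence $Q_{n-1}^*$ and, for $\overline{b}=\pm1$, $P_n$) real coefficients, and conjugate pairing of the nonreal zeros follows. The only imprecision is in part (ii): the two hypothesis lists do not literally become identical, since Corollary \ref{end} retains the fixed-zero hypothesis $P(e^{i\theta_0};t)=0$ (i.e.\ $P(-1;t)=0$) inherited from Theorem \ref{markov}, which Theorem \ref{markoveven} explicitly drops, and Corollary \ref{end} additionally covers the clockwise case $\varphi(t)\in(-\pi,0)$; this is presumably why the paper puts `coincides' in quotation marks, and your reading of the substantive content is otherwise accurate.
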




We can now rephrase Corollary \ref{comp} as follows. 

\begin{coro}\label{coroconjugate}
Let $\mathrm{d}\mu_1(\theta)=\omega_1(\theta)\, \mathrm{d}\mu(\theta)$ and $\mathrm{d}\mu_2(\theta)=\omega_2(\theta)\, \mathrm{d}\mu(\theta)$  be two nonnegative symmetric measures with infinite support on the unit circle parame\-trized by $z=e^{i\theta}$ $(\theta \in  [-\pi, \pi))$ and satisfying the hypotheses of Theorem \ref{markov}. Suppose that $\omega_1(\theta)$ and $\omega_2(\theta)$ are finite, positive and continuous for almost all $\theta$. Let $\omega_2(\theta)/\omega_1(\theta)$ be a strictly decreasing function on  $(-\pi, 0)$ and a strictly increasing function on $(0, \pi)$. Fix $n\geq 2$ and let $\pi> \theta_{1,1}>\cdots >\theta_{n,1}\geq -\pi$ and $\pi> \theta_{1,2}>\cdots >\theta_{n,2}\geq -\pi$ denote the arguments of the zeros of the POPUC of degree $n$ associated with $\mathrm{d}\mu_1$ and $\mathrm{d}\mu_2$, respectively. Then
$$
\theta_{j,1}<\theta_{j,2}
$$ 
for each $j\in \{1, \dots, \floor{n/2}\}$.
\end{coro}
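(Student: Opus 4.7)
The plan is to mimic the proof of Corollary \ref{comp}, but to replace Theorem \ref{markov} by Theorem \ref{markoveven}. First I would set $\mathrm{d}\sigma(\theta;t)=\omega(\theta;t)\,\mathrm{d}\mu(\theta)$ with $\omega(\theta;t)=(1-t)\omega_1(\theta)+t\,\omega_2(\theta)$ and $t\in[0,1]$, so that $\mathrm{d}\sigma(\cdot;0)=\mathrm{d}\mu_1$ and $\mathrm{d}\sigma(\cdot;1)=\mathrm{d}\mu_2$. The bound $|\partial\omega/\partial t|\leq \omega_1+\omega_2$ places $\mathrm{d}\sigma$ under the hypotheses of Proposition \ref{pmain}, and the symmetry of $\omega_1,\omega_2$ is inherited by $\omega(\cdot;t)$. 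Taking $b=\pm 1$, the nonreal zeros of the associated POPUC of degree $n$ then occur in complex conjugate pairs.

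Next I would verify the key monotonicity hypothesis of Theorem \ref{markoveven} by writing
\begin{align*}
\frac{1}{\omega(\theta;t)}\frac{\partial\omega}{\partial t}(\theta;t)=\frac{r(\theta)-1}{(1-t)+t\,r(\theta)},\qquad r(\theta)=\frac{\omega_2(\theta)}{\omega_1(\theta)}.
\end{align*}
For each $t\in(0,1)$ the right-hand side is a strictly increasing function of $r$, so the assumed monotonicity of $r$ (strictly decreasing on $(-\pi,0)$, strictly increasing on $(0,\pi)$) transfers to \eqref{mark} unchanged.

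Then I would fix $k\in\{1,\dots,\lfloor n/2\rfloor\}$ and invoke Proposition \ref{pmain} to start a smooth branch $\zeta(t)=e^{i\varphi(t)}$ at $e^{i\theta_{k,1}}$ with $\varphi(0)\in(0,\pi)$. Theorem \ref{markoveven} yields $\varphi'(t)>0$ wherever the branch is defined. A standard continuation argument, based on the simplicity of POPUC zeros and the compactness of $[0,1]$, extends the tracking to all of $[0,1]$. Since distinct branches in the upper half of $\mathbb{S}^1$ remain separated and correctly ordered, the endpoint is $\varphi(1)=\theta_{k,2}$, and strict monotonicity gives $\theta_{k,1}<\theta_{k,2}$.

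The hardest step will be the global continuation, since Proposition \ref{pmain} is purely local. Two scenarios must be excluded: (i) a collision of the tracked branch with another zero, ruled out by simplicity together with $\varphi'>0$ forcing the branches in $(0,\pi)$ to stay uniformly separated; and (ii) a branch leaving $(0,\pi)$ by crossing the real axis during the deformation, which is impossible because the symmetry of $\mathrm{d}\sigma(\cdot;t)$ forces nonreal zeros to come in conjugate pairs and such a crossing would require a pair to merge at $0$ or $\pi$, contradicting simplicity.
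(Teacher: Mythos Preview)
Your proposal is correct and follows essentially the same approach as the paper: define the convex interpolation $\omega(\theta;t)=(1-t)\omega_1+t\,\omega_2$, take the POPUC parameter $b=\pm1$ so that the symmetry of $\mathrm{d}\sigma(\cdot;t)$ forces nonreal zeros into conjugate pairs, and then invoke Theorem~\ref{markoveven} rather than Theorem~\ref{markov}. The paper's proof is a single sentence to this effect; your write-up simply fills in the monotonicity check for \eqref{mark} and the local-to-global continuation, both of which the paper leaves implicit.
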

\begin{proof}
We proceed in the same manner as in the proof of Corollary \ref{comp}, but now we construct a POPUC of degree $n$ associated with $\mathrm{d}\sigma$ (defined in Corollary \ref{comp}), $P_n(\cdot; b; \mathrm{d}\sigma)$, whose parameter $b$ is equal to $\pm1$.
\end{proof}

The next proposition is nothing more than a direct consequence of a result by V. B. Lidskii \cite{L55} (see also \cite[Section V.6.]{A64}). 

\begin{proposition}\label{Lidskii}
Let $\mathrm{d}\mu(\theta)$ be a finite nonnegative measure with infinite support on the unit circle parametrized by $z=e^{i\theta}$. Let $b$ be a function of a real variable $t$ defined on a real open interval containing $t_0$ with values in $\mathbb{S}^1$. Assume the existence of the derivative of $b(t)$ near $t=t_0$. Let $P$ be a monic POPUC defined as in \eqref{popuc} for $b=b(t)$. Suppose that $P(\zeta_0; t_0)=0$. Then there exist $\epsilon>0$ and $\delta>0$ such that $C_\delta(\zeta_0) \times I_\epsilon(t_0)$ is in the neighbourhood where $P$ is defined, and there exists $\zeta: I_\epsilon(t_0)\to C_\delta(\zeta_0)$, such that \eqref{zero} holds and, for each $t \in I_\epsilon(t_0)$, $\zeta$ is the unique solution of \eqref{zero} with $\zeta(t)\in C_\delta(\zeta_0)$. Moreover, $\zeta$ is differentiable on $I_\epsilon(t_0)$. Furthermore, $\zeta(t)$ moves strictly counterclockwise along $\mathbb{S}^1$ as $t$ increases on $I_\epsilon(t_0)$, provided that 
\begin{align*}
 i\, \overline{b(t)} b'(t)
 \end{align*} 
 is strictly positive.
\end{proposition}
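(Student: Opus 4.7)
The plan is to split the proof into two independent parts: existence and differentiability of $\zeta(t)$, and the sign of its angular velocity. For the first part, I would mimic the proof of Proposition \ref{pmain}. Through \eqref{popuc}, the coefficients of $P(\cdot;t)$ are affine functions of $\overline{b(t)}$, so the existence of $b'$ near $t_0$ transfers to differentiability in $t$ of every coefficient of $P$. Since any POPUC has only simple zeros, $\partial_z P(\zeta_0;t_0)\neq 0$, and the implicit function theorem produces $\epsilon,\delta>0$ and a differentiable function $\zeta:I_\epsilon(t_0)\to C_\delta(\zeta_0)$ uniquely determined by \eqref{zero} on $C_\delta(\zeta_0)\times I_\epsilon(t_0)$.

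For the angular motion, I would exploit the GGT factorization \eqref{G}. Write $\mathrm{G}(t)=\mathrm{H}\,\mathrm{D}(t)$, where $\mathrm{H}=\mathrm{G}_0\mathrm{G}_1\cdots \mathrm{G}_{n-2}$ is $t$-independent and $\mathrm{D}(t)=\diag(\mathrm{I}_{n-1},\overline{b(t)})$. Using $|b(t)|=1$ to rewrite $\overline{b'(t)}\,b(t)=-\overline{b(t)}\,b'(t)$, a direct computation yields
\begin{align*}
-i\,\mathrm{G}'(t)\mathrm{G}(t)^*=i\,\overline{b(t)}\,b'(t)\,\mathrm{H}\,e_n e_n^*\,\mathrm{H}^*,
\end{align*}
a rank-one Hermitian matrix whose only nonzero eigenvalue is the real number $i\overline{b(t)}b'(t)$. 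The standard first-order perturbation formula for a smooth unitary family with simple spectrum (this is Lidskii's theorem in the form used here; cf.\ \cite[Section V.6]{A64}) states that the arguments $\varphi_j(t)$ of the eigenvalues $e^{i\varphi_j(t)}$ of $\mathrm{G}(t)$ satisfy $\varphi_j'(t)=v_j(t)^*\bigl(-i\,\mathrm{G}'(t)\mathrm{G}(t)^*\bigr)v_j(t)$, where $v_j(t)$ is the unit eigenvector at $e^{i\varphi_j(t)}$, so that
\begin{align*}
\varphi_j'(t)=i\,\overline{b(t)}\,b'(t)\,\bigl|\bigl(\mathrm{H}^*v_j(t)\bigr)_n\bigr|^2.
\end{align*}

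The only remaining obstacle, which I expect to be the genuinely delicate point, is to show $(\mathrm{H}^*v_j(t))_n\neq 0$. Rewriting $\mathrm{G}(t)v_j(t)=e^{i\varphi_j(t)}v_j(t)$ as $\mathrm{D}(t)v_j(t)=e^{i\varphi_j(t)}\mathrm{H}^*v_j(t)$ and reading the last component gives $(\mathrm{H}^*v_j(t))_n=e^{-i\varphi_j(t)}\overline{b(t)}\,(v_j(t))_n$, so it is enough to show $(v_j(t))_n\neq 0$. Since $\mathrm{G}(t)$ is upper Hessenberg with positive subdiagonal entries (see the discussion following \eqref{G}), the last row of the eigenvalue equation forces $(v_j(t))_{n-1}=0$ as soon as $(v_j(t))_n=0$, and propagating this argument through the rows from bottom to top yields $v_j(t)=0$, contradicting $\|v_j(t)\|=1$. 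Hence $|(\mathrm{H}^*v_j(t))_n|^2>0$, so $\varphi_j'(t)>0$ whenever $i\overline{b(t)}b'(t)>0$, and in particular $\zeta(t)$ moves strictly counterclockwise on $\mathbb{S}^1$, as claimed.
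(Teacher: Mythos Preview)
Your proof is correct and follows essentially the same route as the paper: implicit function theorem for the first part, then the GGT representation \eqref{G} combined with first-order perturbation of a simple eigenvalue of a unitary matrix for the angular derivative, and finally the unreduced upper Hessenberg structure to show the relevant eigenvector component is nonzero. The only cosmetic difference is that the paper sandwiches $-i\,\mathrm{G}^*(t)\mathrm{G}'(t)=-i\,\mathrm{G}_{n-1}^*(t)\mathrm{G}_{n-1}'(t)$ (which is already diagonal) and lands directly on $i\,\overline{b}\,b'\,|(\mathrm{p})_{n-1}|^2$, whereas you sandwich $-i\,\mathrm{G}'(t)\mathrm{G}(t)^*$ and then have to undo the factor of $\mathrm{H}^*$ to reach the same quantity; both yield the same formula.
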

\begin{proof}
Clearly, the first two statements of the theorem follow as in Theorem \ref{markov}. Throughout the proof, the matrix valued function $\mathrm{G}(t)$ denotes the matrix \eqref{G} for $a_j=a_j(t)$ and $b=b(t)$. In view of the analytic implicit function theorem, we can choose a normalized eigenpair $(\zeta(t), \mathrm{p}(t))$ that depends differentiably on $t$. Write $\zeta(t)=e^{i\varphi(t)}$. Since $\mathrm{G}(t)$ is normal (in particular, unitary), $\zeta(t)=(\mathrm{p}(t), \mathrm{G}(t) \mathrm{p}(t))$. Moreover, since $\zeta(t)$ is a simple eigenvalue of $\mathrm{G}(t)$, $\zeta'(t)=(\mathrm{p}(t), \mathrm{G}'(t) \mathrm{p}(t))$. Thus
\begin{align*}
\varphi'(t)&= \big(\mathrm{p}(t), -i\,\mathrm{G}^*(t) \mathrm{G}'(t)\,\mathrm{p}(t)\big)=\big(\mathrm{p}(t), -i\,\mathrm{G}^*_{n-1}(t) \mathrm{G}'_{n-1}(t)\, \mathrm{p}(t)\big)\\&= i\, \overline{b(t)}\, b'(t) |(\mathrm{p})_{n-1}(t)|^2,
\end{align*}
 $(\mathrm{p})_{n-1}(t)$ being the last component of $\mathrm{p}(t)$. Finally, the result follows because $(\mathrm{p})_{n-1}(t)$ is nonzero\footnote{This is true for any unreduced Hessenberg matrix (cf. \cite[Lemma 2.1]{L95}), although in this case all the components of $\mathrm{p}$ are nonzero (see \cite[Chapter $4$]{S05I}).}.
\end{proof}

In this section we have given readers a taste of the flexibility of our arguments, hopping that they can easily adapt it to a wide variety of situations not considered in this work.

\section{Examples}\label{examples}

In this section, we consider some applications of the results of Section \ref{main} to specific weight functions on the unit circle. The reader should satisfy himself that  the hypotheses of Proposition \ref{pmain} are fulfilled.

\subsection{Degree one Bernstein-Szeg\H{o} polynomials}\label{BS}
Let
$$
\mathrm{d}\mu_\zeta(\theta)=\frac{1-|\zeta|^2}{|1-\zeta e^{i\theta}|^2}\frac{\mathrm{d}\theta}{2\pi}
$$
for $\zeta\in \mathbb{D}$ and $\theta \in [0, 2\pi)$. If one set $\zeta=r e^{i\varphi}$ for $\varphi \in [0, 2\pi)$ and defines the Poisson kernel by
$$
P_r(\theta, \varphi)=\frac{1-r^2}{1+r^2-2r\cos(\theta-\varphi)},
$$
then we may write $\dps\mathrm{d}\mu_\zeta(\theta)=P_r(\theta, -\varphi)\frac{\mathrm{d}\theta}{2\pi}$. The OPUC for this measure are given  by $($cf. \cite[Example 1.6.2]{S05I}$)$ 
$$
Q_n(z; \mathrm{d}\mu_\zeta)=z^n-\overline{\zeta}z^{n-1}\quad (n=1,2,\dots).
$$
An easy computation shows that
\begin{align*}
A(\theta; r, \varphi)=\frac{1}{P_r(\theta, -\varphi)}\frac{\partial P_r}{\partial r}(\theta, -\varphi)&=\frac{4r-2(r^2+1)\cos (\theta+\varphi)}{(r^2-1)(r^2-2r\cos(\theta+\varphi)+1)},\\[7pt]
B(\theta; r, \varphi)=\frac{1}{P_r(\theta, -\varphi)}\frac{\partial P_r}{\partial \varphi}(\theta, -\varphi)&=-\frac{2r\sin(\theta+\varphi)}{r^2-2r\cos(\theta+\varphi)+1}.
\end{align*}
And once we have reached this point, the first thing we must do is to verify if the functions $A$ and $B$  increase $($and decrease$)$ at most once on $(0, 2\pi)$. $($This is a necessary, although not sufficient, condition for a successful use of the results of the previous section.$)$  For illustration, consider the case $A(\theta; 0.1, 0)$ $($solid line$)$, $A(\theta; 0.5, 0)$ $($dash line$)$, $A(\theta; 0.5, \pi)$ $($dotted line$)$ and $A(\theta; 0.1, 3\pi/2)$ $($dash-dotted line$)$, and $B(\theta; 0.1, 0)$ $($solid line$)$, $B(\theta; 0.5, 0)$ $($dash line$)$, $B(\theta; 0.5, \pi)$ $($dotted line$)$ and $B(\theta; 0.1, 3\pi/2)$ $($dash-dotted line$)$ displayed in Figure \ref{F12}. In all these cases, the function $B$  does not fulfil the required conditions.
\begin{figure}[h]
\centering
  \includegraphics[width=5.5cm]{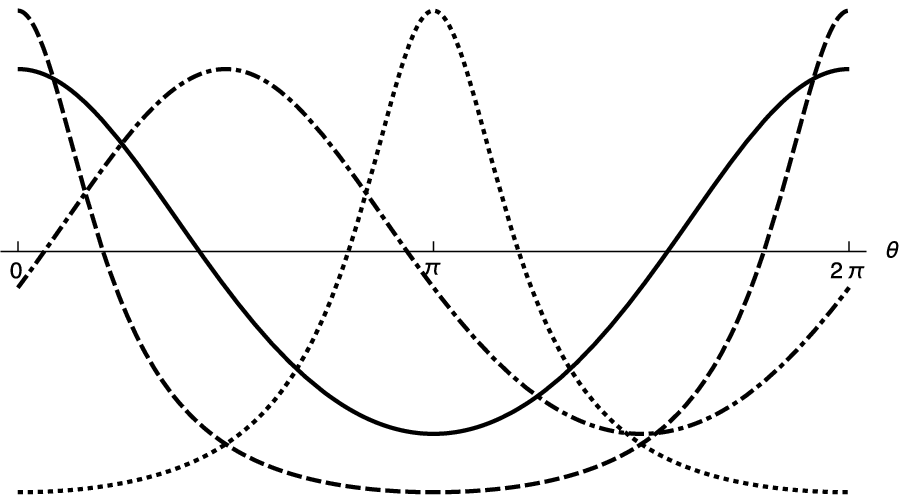}\quad   \includegraphics[width=5.5cm]{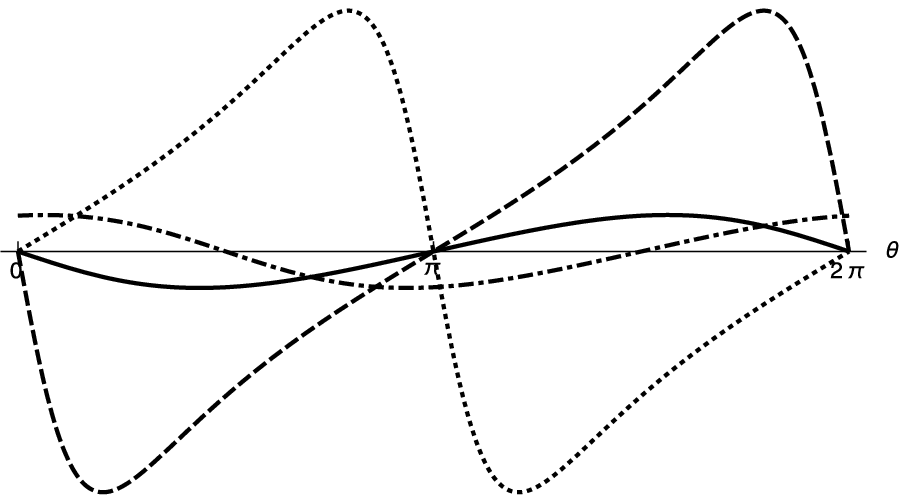}
  \caption{$A(\theta; r, \varphi)$ (left plot) and $B(\theta; r, \varphi)$ (right plot) for certain values of $r$ and $\varphi$.}\label{F12}
\end{figure}

In what follows, for simplicity, we will specialize to the case $r\in (0,1)$ and $\varphi=0$, that is, $\zeta=r$. In this case
$$
A(\theta; r; 0)=\frac{4r-2(r^2+1)\cos\theta}{(r^2-1)(r^2-2r\cos\theta+1)}
$$
 is a strictly decreasing function of $\theta$ on  $(0, \pi)$ and a strictly increasing function of $\theta$ on $(\pi, 2\pi)$. Fix $\xi \in \mathbb{S}^1$. From Remark \ref{remarkkernel}, it may be concluded that the zeros of 
$$
P_{n+1}\left(z; \xi^{n-1} \frac{\xi-r}{\overline{\xi-r}}; \mathrm{d}\mu_\zeta\right)=z^{n+1}-r z^n+r \left(\xi^{n-1} \frac{\xi-r}{\overline{\xi-r}}\right)z-\xi^{n-1} \frac{\xi-r}{\overline{\xi-r}}
$$
are $\xi$ plus the zeros of $K_{n}(\xi, \cdot ; \mathrm{d}\mu_\zeta)$. Hence, the nonreal zeros of $K_{n}(\pm1, \cdot ; \mathrm{d}\mu_\zeta)$ occur in complex conjugate pairs. Thus, by the circular Markov theorem for complex conjugate zeros, these zeros move strictly clockwise on the upper semicircle as $r$ increases on $(0, 1)$.  We can evidently not expect to obtain information about the behavior of the zeros of $K_{n}(\xi, \cdot ; \mathrm{d}\mu_\zeta)$ for each $\xi \in \mathbb{S}^1$. Figure \ref{F22} shows the behaviour of the zeros of $K_{14}(1, \cdot ; \mathrm{d}\mu_\zeta)$ and $K_{14}(i, \cdot ; \mathrm{d}\mu_\zeta)$ for $r=0.1$ (discs), $r=0.5$ (squares) and $r=0.9$ (diamonds). The zeros of $K_{14}(1, \cdot ; \mathrm{d}\mu_\zeta)$ behave exactly as predicted, but on the other hand the zeros of $K_{14}(i, \cdot ; \mathrm{d}\mu_\zeta)$ do not behave in the same way.
\begin{figure}[H]
\centering
  \includegraphics[width=5.5cm]{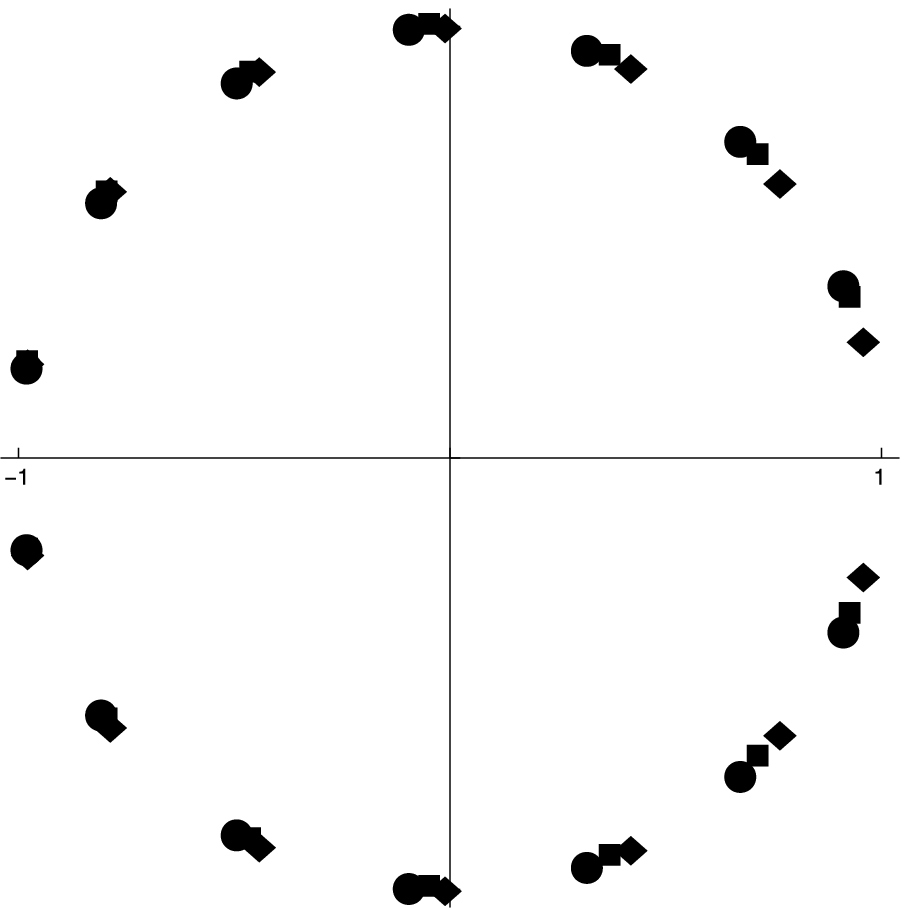}\quad   \includegraphics[width=5.5cm]{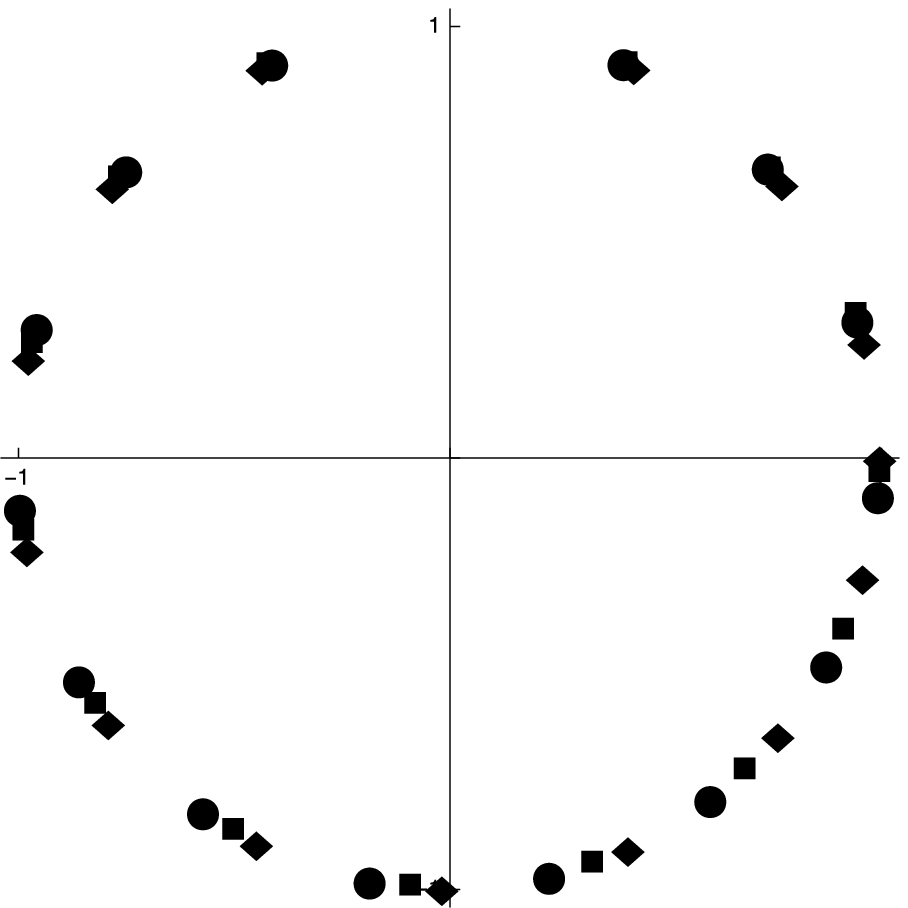}
  \caption{Zeros of $K_{14}(1, \cdot ; \mathrm{d}\mu_\zeta)$ (left plot) and $K_{14}(i, \cdot ; \mathrm{d}\mu_\zeta)$ (right plot) for certain values of $r$.}\label{F22}
\end{figure}

\subsection{Single nontrivial moment}
Let
$$
\mathrm{d}\mu_r(\theta)=(1-r \cos \theta)\frac{\mathrm{d}\theta}{2\pi}
$$
for $r\in(0,1)$ and $\theta \in [0, 2\pi)$. The OPUC for this measure are given by $($see \cite[Example $1.6.4$]{S05I}$)$
$$
Q_n(z; \mathrm{d}\mu_r)=\frac{1}{d_n}\,\sum_{j=0}^n d_j z^j \quad (n=0,1,\dots).
$$
where 
$$
d_j=\frac{d_+^{j+1}-d_-^{j+1}}{d_+-d_-},
$$
and $d_\pm$ are the roots of $r d^2-2d+r=0$, that is,
$$
d_{\pm}=\frac{1}{r} \pm \sqrt{\dps\frac{1}{r^2}-1}.
$$
Now we proceed as in Section \ref{BS}. Indeed, since the function
$$
A(\theta; r)=-\frac{\cos\theta}{1-r \cos\theta}
$$
 is a strictly increasing function of $\theta$ on  $(0, \pi)$ and a strictly decreasing function of $\theta$ on $(\pi, 2\pi)$, by the circular Markov theorem for complex conjugate zeros, we can conclude that the nonreal zeros of 
$$
P_{n+1}\left(z; \pm 1; \mathrm{d}\mu_r\right)=\frac{1}{d_n}\,\sum_{j=0}^{n+1} \left(d_{j-1}\mp d_{n-j}\right)z^j
$$
 move strictly counterclockwise on the upper semicircle as $r$ increases on $(0, 1)$.  By virtue of Corollary \ref{coroconjugate}, we may also compare the zeros of $P_{n+1}\left(\cdot; \pm 1; \mathrm{d}\mu_r\right)$ with those of $P_{n+1}\left(z; \pm 1; \mathrm{d}\mu_\zeta\right)$ for $\varphi=0$. Let $2\pi> \theta_{1}(\mathrm{d}\mu_\zeta)>\cdots >\theta_{n-1}(\mathrm{d}\mu_\zeta)\geq 0$ and $2\pi> \theta_{1}(\mathrm{d}\mu_r)>\cdots >\theta_{n}(\mathrm{d}\mu_r)\geq 0$ denote the arguments of the zeros of $P_{n+1}\left(z; \pm 1; \mathrm{d}\mu_\zeta\right)$ and $P_{n+1}\left(z; \pm 1; \mathrm{d}\mu_r\right)$, respectively. Define $\omega(\theta; r)=(1-r \cos \theta)$. An easy calculation reveals that the function
$$
\frac{P_r(\theta; 0)}{\omega(\theta; r)}=\frac{1-r^2}{(1+r^2-2r\cos\theta)(1-r \cos \theta)}
$$
 is a strictly decreasing function of $\theta$ on  $(0, \pi)$ and a strictly increasing function of $\theta$ on $(\pi, 2\pi)$. Thus, Corollary \ref{comp} implies that
 \begin{align}\label{auxx}
\theta_j(\mathrm{d}\mu_r)< \theta_j(\mathrm{d}\mu_\zeta)
 \end{align}
 when $\theta_j(\mathrm{d}\mu_r) \in (0, \pi)$.  
  Figure \ref{FX} shows the behaviour of the zeros of $K_{14}(1, \cdot ; \mathrm{d}\mu_\zeta)$ (discs) and $K_{14}(1, \cdot ; \mathrm{d}\mu_r)$ (squares) for $r=0.8$ and $P_{15}(\cdot; i; \mathrm{d}\mu_\zeta)$ (discs) and $P_{15}(\cdot; i; \mathrm{d}\mu_r)$ (squares) for $r=0.8$. Although the zeros of $K_{14}(1, \cdot ; \mathrm{d}\mu_\zeta)$ and $K_{14}(1, \cdot ; \mathrm{d}\mu_r)$ behave exactly as predicted, the zeros of $P_{15}(\cdot; i; \mathrm{d}\mu_\zeta)$ and $P_{15}(\cdot; i;$ $\mathrm{d}\mu_r)$, as expected, do not satisfy \eqref{auxx}.
  \begin{figure}[h]
\centering
  \includegraphics[width=5.5cm]{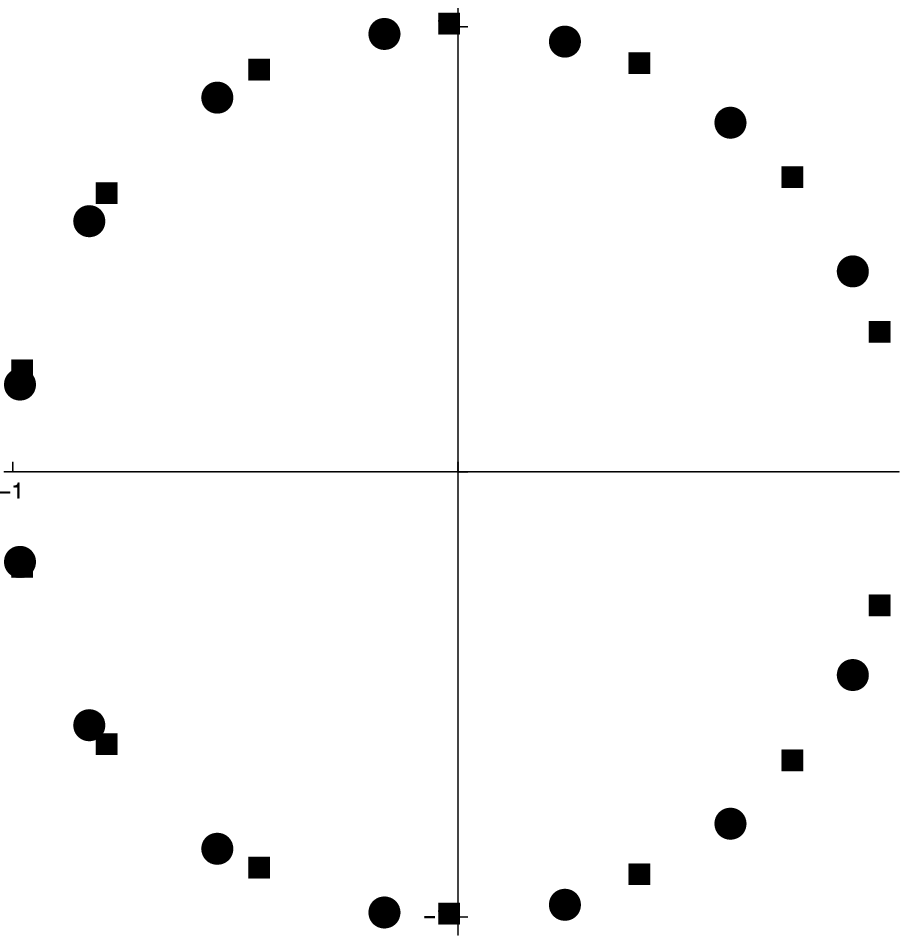}\quad   \includegraphics[width=5.5cm]{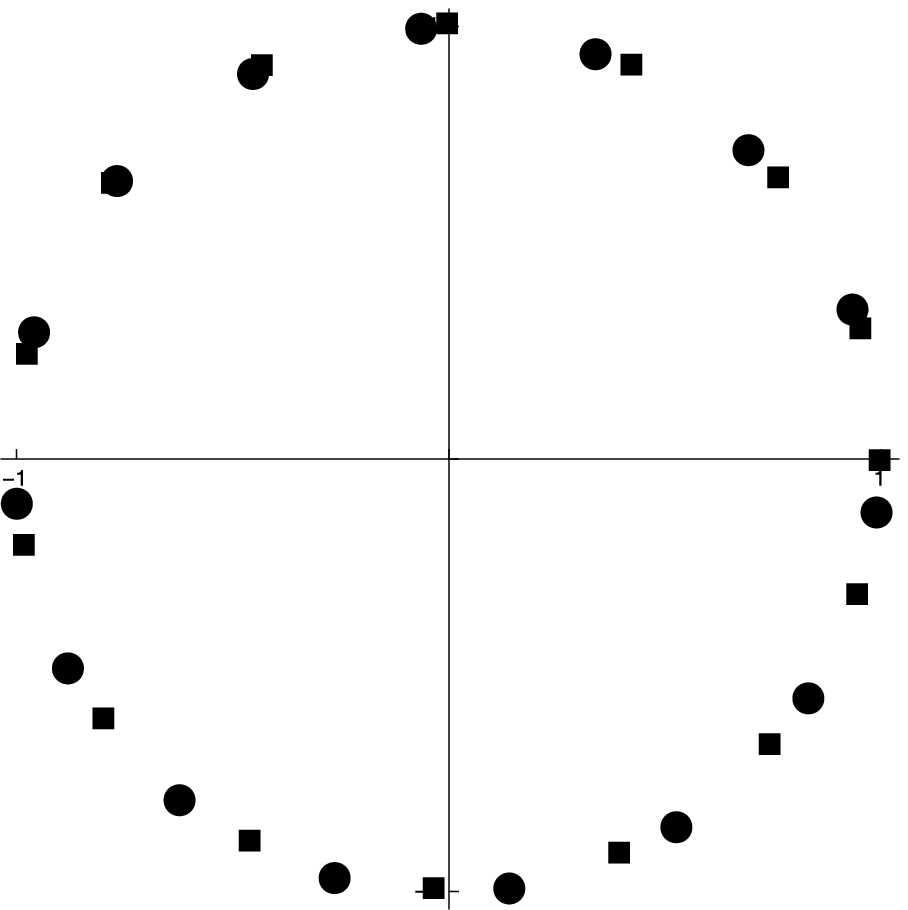}
  \caption{Zeros of $K_{14}(1, \cdot ; \mathrm{d}\mu_\zeta)$ and $K_{14}(1, \cdot ; \mathrm{d}\mu_r)$ (left plot) and $P_{15}(\cdot; i; \mathrm{d}\mu_\zeta)$ and $P_{15}(\cdot; i;$ $\mathrm{d}\mu_r)$ (right plot) for certain values of $r$.}\label{FX}
\end{figure}
  
  \subsection{Jacobi-Szeg\H{o} polynomials}
Let
 \begin{align*}
\mathrm{d}\mu^{(r, s)}(\theta)=\frac{|\Gamma(r+is+1)|^2}{\Gamma(2 r+1)}\,(2-2\cos \theta)^r\dps(-e^{i\theta})^{i s}\, \frac{\mathrm{d}\theta}{2\pi}
\end{align*}
for $r \in (-1/2, \infty)$, $s \in (-\infty, \infty)$, and $\theta \in [-\pi,\pi)$.  There are a variety of specific problems, particularly in statistical physics, which are closely related to this measure. Indeed, $\mathrm{d}\mu^{(r, s)}$ belongs to a class of measures introduced by Fisher and Hartwig in \cite{FH68} which has been the subject of numerous investigations $($see \cite{DIK11} and the references given there$)$.  The following alternative expression for $\theta \in [0, 2\pi)$ is also found in the literature (see \cite{GK83}):
 \begin{align*}
\mathrm{d}\mu^{(r, s)}(\theta)=\frac{|\Gamma(r+is+1)|^2}{\Gamma(2 r+1)} 2^{2 r} e^{(\pi-\theta) s}\left(\sin \dps\frac{\theta}{2} \right)^{2 r}\,\frac{\mathrm{d}\theta}{2\pi}.
\end{align*}
The OPUC for this measure are given by $($see \cite[Sections $1.1$ and $1.2$]{GK83} and \cite[Section $3$]{A85} for more details$)$ 
\begin{align}\label{opuc}
Q_n(z; \mathrm{d}\mu^{(r, s)})=\frac{(2r+1)_n}{(r+i s+1)_n}\, \pFq{2}{1}{-n,r+is+1}{2r+1}{1-z} \quad (n=0,1,\dots).
\end{align}
These polynomials can be expressed in terms of Heisenberg polynomials\footnote{We use the notation, now standard, of \cite{G81, GK83}.}, which live on the Heisenberg group $($see \cite[(1.7)]{GK83}$)$, that is, 
$$
Q_n(e^{i\theta}; \mathrm{d}\mu^{(r, s)})=\frac{n!}{(r+i s+1)_n}\, \dps e^{i \,n\theta/2}\, \dps C_n^{(r-is, r+is+1)}(e^{i\theta/2}).
$$
Define $\omega_1(\theta; r, s)=(2-2\cos \theta)^r (-e^{i\theta})^{i s}$ and $\omega_2(\theta; r, s)=2^{2 r} e^{(\pi-\theta) s}\left(\sin \dps\frac{\theta}{2} \right)^{2 r}$. Hence
\begin{align*}
A(\theta; r, s)=\frac{1}{\omega_1(\theta; r, s)}\frac{\partial \omega_1(\theta; r, s)}{\partial r}&=\log(2-2\cos\theta),\\[5pt]
B(\theta; r, s)=\frac{1}{\omega_2(\theta; r, s)}\frac{\partial \omega_2(\theta; r, s)}{\partial s}&=\pi-\theta.
\end{align*}
 We can therefore apply the results of Section \ref{main} to study the variation of zeros of certain POPUC associated with $\mathrm{d}\mu^{(r, s)}$. 

Given any $\xi=e^{i\theta_0(r,s)}$, we define
$$
b^{(r, s)}(\xi)=\overline{\xi}\, \frac{(r+is+1)_n}{(r-i s+1)_n}\,\frac{\dps \pFq{2}{1}{-n,r-is+1}{2r+1}{1-\xi}}{\dps \pFq{2}{1}{-n,r-is}{2r+1}{1-\xi}}.
$$
By Remark \ref{remarkkernel}, $P_{n+1}\left(z; b^{(r, s)}(\xi); \mathrm{d}\mu^{(r, s)}\right)$ has a zero at $z=\xi$. Assume $\xi=1$ $($or, what is the same, $\theta_0=0$$)$. Since $B(\theta; r, s)$ is a strictly decreasing function of $\theta$ on  $(0, 2\pi)$, by the circular Markov theorem with a fixed zero, the nonreal zeros of $P_{n+1}\left(z; b^{(r, s)}(1); \mathrm{d}\mu^{(r, s)}\right)$ move strictly clockwise along $\mathbb{S}^{1}$ as $s$ increases on $(-\infty, \infty)$. This is the main result of \cite{DR13} (see Theorem $1.2$ therein). Indeed, since
$$
b^{(r, s)}(1)=\frac{(r+i s+1)_{n+1}}{(r-i s+1)_{n+1}},
$$
we may conclude that
\begin{align*}
P_{n+1}\left(z; b^{(r, s)}(1); \mathrm{d}\mu^{(r, s)}\right)=\frac{(2r+2)_n}{(r+is+1)_n}(z-1)\,\pFq{2}{1}{-n,r+is+1}{2r+2}{1-z}.
\end{align*}
Thus, for each $r\in (1/2, \infty)$, the zeros of the polynomial
\begin{align*}
f_n(z; r, s)=\pFq{2}{1}{-n, r+is}{2r}{1-z}
\end{align*}
 move strictly clockwise along $\mathbb{S}^{1}$ as $s$ increases on $(-\infty, \infty)$. But this is also true whenever $r\in (0, 1/2)$ $($see \cite[Example $3.1$]{C19}$)$. Thus warned, the reader should be recall that our conditions are only sufficient.
 
 \begin{obs}\label{remarkPro}
Since
  \begin{align}
\label{Eq0}P_{n+1}\left(z; \dps-\frac{r+i s}{r-i s}\, b^{(r, s)}(1); \mathrm{d}\mu^{(r, s)}\right)&=\frac{(2r)_{n+1}}{(r+is)_{n+1}}\,f_{n+1}(z; r, s)\\[7pt]
\label{Eq1}&=\dps\frac{(n+1)!}{(r+is)_{n+1}}\, e^{i \,(n+1) \theta/2}\, \dps C_{n+1}^{(r-is, r+is)}(e^{i\theta/2}),
\end{align}
whenever $r\in(-1/2,$ $\infty)\setminus\{0\}$ and $s\in(-\infty, \infty)$, it follows $($for example by contradiction and using \cite[(2.5.16)]{AAR99}$)$ that $f_{n+1}(\cdot; r, s)$ and $f_{n+2}(\cdot; r, s)$ are ``consecutive" coprime POPUC; whence \cite[Corollary $2.14.5.$]{S11} shows that their zeros strictly interlace $($in the sense explained in \cite[Definition 1.2]{CP18}$)$ on $\mathbb{S}^1$. This specializes to the result of \cite[Theorem 1.1]{DR13} if $r\in(0, \infty)$. For $r=0$ we have
$$
P_{n+1}\left(z; b^{(0, s)}(1); \mathrm{d}\mu^{(0, s)}\right)=\frac{(n+1)!}{(i s)_{n+1}}(z-1) g_n(z; s),
$$
where
$$
g_n(z; s)=\,\pFq{2}{1}{-n,is+1}{2}{1-z}
$$
and so, by the argument above, it can be also shown that the zeros of $g_{n+1}(\cdot; s)$ and $g_{n+2}(\cdot; s)$ strictly interlace on $\mathbb{S}^1$.
 \end{obs}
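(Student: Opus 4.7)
The plan is to decompose the remark into three tasks: verify the chain of identifications \eqref{Eq0}--\eqref{Eq1}; show that $f_{n+1}(\cdot;r,s)$ and $f_{n+2}(\cdot;r,s)$ are coprime; and then invoke \cite[Corollary $2.14.5.$]{S11} to conclude strict interlacing. The $r=0$ case is handled by the same template with $f_n$ replaced by $g_n$.

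To verify \eqref{Eq0}, start from $P_{n+1}(z;\beta;\mathrm{d}\mu^{(r,s)}) = zQ_n(z;\mathrm{d}\mu^{(r,s)}) - \overline{\beta}\, Q_n^{*}(z;\mathrm{d}\mu^{(r,s)})$ and substitute the explicit hypergeometric form \eqref{opuc}. The reversed polynomial $Q_n^{*}$ is obtained by reading off coefficients and conjugating $r+is\mapsto r-is$; its expression in powers of $1-z$ follows from a standard ${}_2F_1$ transformation. With the specific choice $\beta = -\dfrac{r+is}{r-is}\, b^{(r,s)}(1)$, the contiguous relation \cite[(2.5.16)]{AAR99} collapses the combination $zQ_n - \overline{\beta}\, Q_n^{*}$ into a single ${}_2F_1(-(n+1), r+is; 2r; 1-z)$, and the scalar $(2r)_{n+1}/(r+is)_{n+1}$ is pinned down by monicity of $P_{n+1}$. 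The second equality \eqref{Eq1} is then a direct hypergeometric rewriting using \cite[(1.7)]{GK83}.

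To establish coprimeness, note that the same computation with $n$ replaced by $n+1$ identifies $f_{n+2}$ as a nonzero scalar multiple of a monic POPUC of degree $n+2$ associated with the same measure $\mathrm{d}\mu^{(r,s)}$ (with an analogous parameter). Suppose for contradiction that $\zeta\in\mathbb{S}^1$ satisfies $f_{n+1}(\zeta;r,s) = f_{n+2}(\zeta;r,s) = 0$. Gauss's contiguous relations---in particular \cite[(2.5.16)]{AAR99}---yield an identity of the form
$$ A(z)\,f_{n+2}(z;r,s) + B(z)\,f_{n+1}(z;r,s) + C(z)\,f_n(z;r,s) = 0, $$
with $C$ a low-degree polynomial. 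Evaluating at $\zeta$ gives $C(\zeta)\,f_n(\zeta;r,s) = 0$; provided $C(\zeta)\neq 0$ this forces $f_n(\zeta;r,s) = 0$, and iterating downward yields $f_0(\zeta;r,s) = 0$, contradicting $f_0 \equiv 1$. Hence $f_{n+1}$ and $f_{n+2}$ share no common zero on $\mathbb{S}^1$, so \cite[Corollary $2.14.5.$]{S11} applies to give strict interlacing.

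For $r = 0$, the Pochhammer $(2r+1)_n$ in \eqref{opuc} no longer produces the $(2r)_{n+1}$ cancellation; the analogous calculation yields $P_{n+1}(z;b^{(0,s)}(1);\mathrm{d}\mu^{(0,s)}) = \tfrac{(n+1)!}{(is)_{n+1}}(z-1)g_n(z;s)$, so $(z-1)g_n$ and $(z-1)g_{n+1}$ are coprime consecutive POPUC, giving strict interlacing of the zeros of $g_{n+1}$ and $g_{n+2}$ via the same downward-induction argument applied to ${}_2F_1(-n, is+1; 2; 1-z)$. The main obstacle will be the coprimeness step: the coefficient $C(z)$ produced by a single contiguous relation may itself vanish at the hypothetical common zero $\zeta$. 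This is circumvented by combining two independent contiguous relations to express $f_n$ as a rational combination of $f_{n+1}$ and $f_{n+2}$, so that the downward induction goes through unconditionally; checking that the resulting $2\times 2$ system has nonzero determinant at $\zeta$ is a short case analysis in the parameters $(r,s)$.
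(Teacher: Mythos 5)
Your verification of \eqref{Eq0}--\eqref{Eq1} and your coprimality argument for $f_{n+1}$ and $f_{n+2}$ follow the route the remark itself indicates (Pfaff transformation for $Q_n^*$, a contiguous relation to collapse $zQ_n-\overline{b}\,Q_n^*$, downward induction terminating at $f_0\equiv 1$), and that part is essentially sound. One comment: the ``main obstacle'' you single out at the end is not actually there. In the Gauss contiguous relation in the first parameter, applied with $a=-(n+1)$, $b=r+is$, $c=2r$ and argument $x=1-z$, the coefficient multiplying $f_n$ is $a(x-1)=(n+1)z$ and the coefficient multiplying $f_{n+2}$ is $c-a=2r+n+1>0$ for $r>-1/2$; since any common zero $\zeta$ of $f_{n+1}$ and $f_{n+2}$ lies on $\mathbb{S}^1$ (both are nonzero scalar multiples of POPUC, all of whose zeros lie on $\mathbb{S}^1$), we have $\zeta\neq 0$ and the downward induction goes through unconditionally. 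The two-relation $2\times 2$ device, whose determinant check you leave as ``a short case analysis,'' is therefore unnecessary --- which is fortunate, since that check is the one step you do not actually carry out.

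The genuine gap is in the $r=0$ case. You assert that $(z-1)g_n$ and $(z-1)g_{n+1}$ are \emph{coprime} consecutive POPUC. They are not: by the construction \eqref{b} of $b^{(0,s)}(1)$, both polynomials vanish at $z=1$, so they share the factor $z-1$ and \cite[Corollary $2.14.5.$]{S11} cannot be applied to this pair (moreover, the pair relevant to the asserted conclusion about $g_{n+1}$ and $g_{n+2}$ would be $(z-1)g_{n+1}$ and $(z-1)g_{n+2}$, not the one you name). The conclusion still holds, but via a different identification: $g_n(z;s)={}_2F_1(-n,\, is+1;\, 2;\, 1-z)=f_n(z;1,s)$, so the strict interlacing of the zeros of $g_{n+1}$ and $g_{n+2}$ is precisely the case $r=1$ of the statement you have already established for $r\in(-1/2,\infty)\setminus\{0\}$; equivalently, by Remark \ref{remarkkernel}, $g_n$ is a nonzero multiple of $K_n(1,\cdot\,;\mathrm{d}\mu^{(0,s)})$ and one may invoke interlacing for CD kernels of consecutive degree. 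The displayed factorization at $r=0$ serves only to identify $g_n$; it does not hand you a coprime pair of consecutive POPUC, and your argument as written would fail at exactly the point where coprimality is invoked.
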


As far as we know the dependence of the zeros of $f_n(\cdot; r, s)$ on $r$ has been studied only when $s=0$ $($see \cite[Theorem $2$]{DD00}$)$. However, the case $ \mathrm{d}\mu^{(r)}=\mathrm{d}\mu^{(r,0)}$ $($see \cite[Example 8.2.5]{I05}$)$ is especially simple because there is a direct connection with the ultrashperical polynomials\footnote{This allows us to use an old result due to Stieltjes (see \cite[p. 389]{S87}).}. Indeed, by \eqref{Eq1}, we have
\begin{align*}
f_n(e^{i\theta}; r, 0)=\dps\frac{n!}{(2r)_n}\, e^{i\,n\theta/2}\, \dps C_n^{(r, r)}(e^{i\theta/2})=\dps\frac{n!}{(2r)_n}\, e^{i \,n\theta/2}\, \dps C^{(r)}_n\left(\cos\frac{\theta}{2}\right),
\end{align*}
where $C^{(r)}$ denotes an ultrashperical polynomial (see \cite[$(1.9)$]{GK83}). In any case, since the nonreal zeros of 
$$
P_n(\cdot; -1; \mathrm{d}\mu^{(r)})=\frac{(2r)_n}{(r)_n}\,f_n(\cdot; r, 0)
$$ 
occur in complex conjugate pairs, by the circular Markov theorem for complex conjugate zeros, we can conclude that  the zeros of this polynomial move strictly counterclockwise on the upper semicircle and strictly clockwise on the lower semicircle as $r$ increases on $(-1/2, \infty)$. We now turn to the general case $s\in(-\infty, \infty)$. Since $A(\theta; r, s)=A(-\theta; r, s)$  is a strictly decreasing function of $\theta$ on  $(-\pi, 0)$ and a strictly increasing function of $\theta$ on  $(0, \pi)$ and $\dps(e^{-i\theta})^{i s}\geq (e^{i\theta})^{i s}$ for each  $\theta \in (0, \pi)$ and  $s\in[0, \infty)$,  Corollary \ref{end} implies that for each $s\in[0,\infty)$ the zero of $f_n(\cdot; r, s)$ move strictly counterclockwise on the upper semicircle as $r$ increases on $(1/2, \infty)$. In exactly the same way we may show that for each $s\in(-\infty, 0]$ the zero of $f_n(\cdot; r, s)$ move strictly clockwise on the lower semicircle as $r$ increases on $(1/2, \infty)$. Figure \ref{F33} shows the behaviour of the zeros of $f_{10}(\cdot; r, 1)$ and $f_{10}(\cdot; r, -2)$ for $r=0.1$ (discs), $r=1$ (squares) and $r=17$ (diamonds). Note that the zeros of $f_{10}(\cdot; r, 1)$ whose arguments lie between $0$ and $\pi$ and the zeros of $f_{10}(\cdot; r, -2)$ whose arguments lie between $-\pi$ and $0$ behave exactly as predicted; however, the remaining zeros are not necessarily monotone functions of $r$.
\begin{figure}[H]
\centering
  \includegraphics[width=5.5cm]{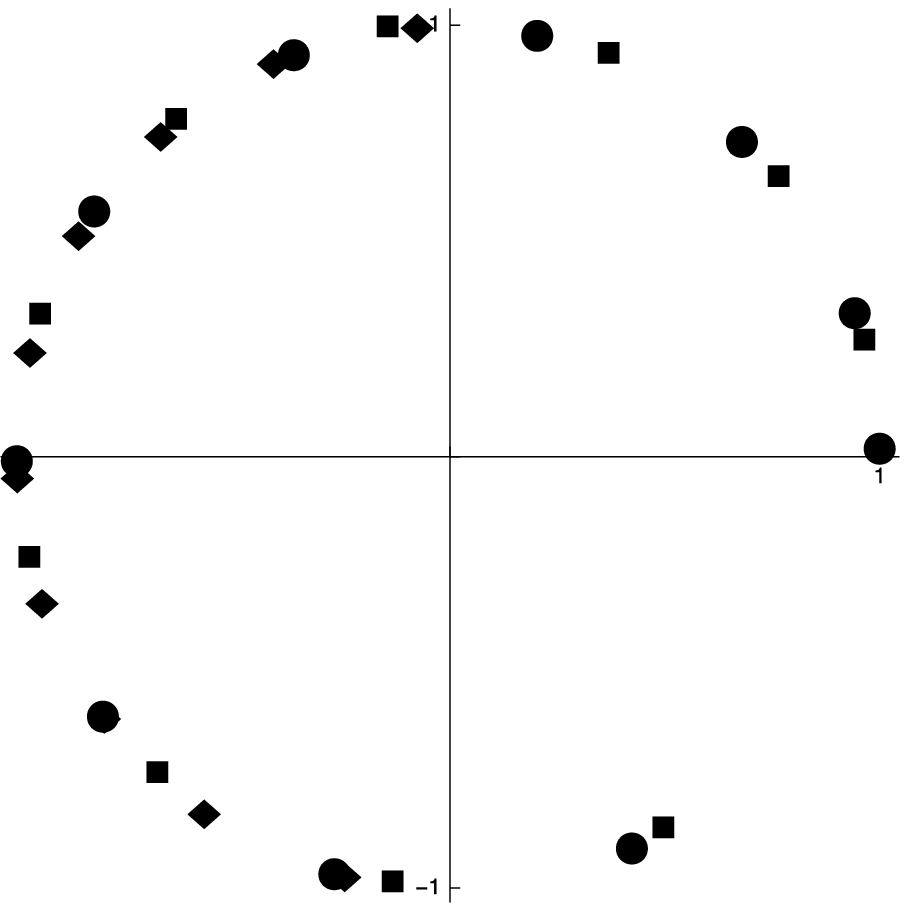}\quad   \includegraphics[width=5.5cm]{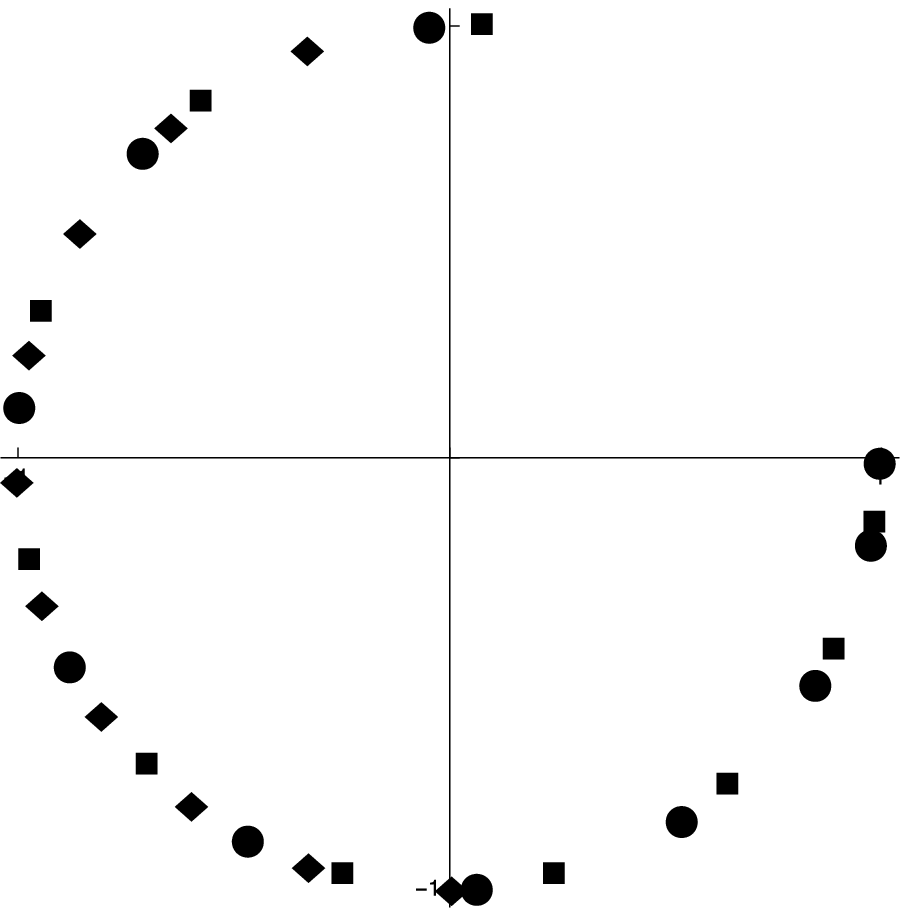}
  \caption{Zeros of $f_{10}(\cdot; r, 1)$ (left plot) and $f_{10}(\cdot; r, -2)$ (right plot) for certain values of $r$.}\label{F33}
\end{figure}

\section*{Acknowledgements}
This work was partially supported by the Centre for Mathematics of the 
University of Coimbra UID/MAT/00324/2019, funded by the Portuguese 
Government through FCT/MEC and co-funded by the European Regional 
Development Fund through the Partnership Agreement PT2020.
\bibliographystyle{plain}      

\bibliography{bib}   

\end{document}